\newtheorem{theorem}{Theorem}[section]
\newtheorem{lemma}{Lemma}[section]
\theoremstyle{definition}
\newtheorem{cor}[theorem]{Corollary}
\numberwithin{equation}{section}
\def\XXint#1#2#3{{\setbox0=\hbox{$#1{#2#3}{\int}$}
     \vcenter{\hbox{$#2#3$}}\kern-.5\wd0}}
\def\moverlay{\mathpalette\mov@rlay}
\def\mov@rlay#1#2{\leavevmode\vtop{%
   \baselineskip\z@skip \lineskiplimit-\maxdimen
   \ialign{\hfil$\m@th#1##$\hfil\cr#2\crcr}}}
\newcommand{\charfusion}[3][\mathord]{
    #1{\ifx#1\mathop\vphantom{#2}\fi
        \mathpalette\mov@rlay{#2\cr#3}
      }
    \ifx#1\mathop\expandafter\displaylimits\fi}
\newcommand{\Sp}{\mathcal{S}}
\newcommand{\D}{\mathscr{D}}
\newcommand{\R}{\mathbb{R}}
\DeclareMathOperator{\supp}{supp}
\begin{document}

\title[A counter example for a sparse operator]{Muckenhoupt-Wheeden conjectures for sparse operators}

%    Information for first author
\author{Cong Hoang}
\address{Cong Hoang, Department of Mathematics, University of Alabama, Tuscaloosa, AL 35487-0350}
%\email{two@maths.univ.edu.au}
%\thanks{Support information for the second author.}

%    General info
\subjclass[2000]{Primary 54C40, 14E20; Secondary 46E25, 20C20}

%    Information for second author
\author{Kabe Moen}
%    Address of record for the research reported here
\address{Kabe Moen, Department of Mathematics, University of Alabama, Tuscaloosa, AL 35487-0350}
%    Current address
%\curraddr{Department of Mathematics and Statistics,
%\email{xyz@math.university.edu}
%    \thanks will become a 1st page footnote.
%\thanks{The first author was supported by NSF Grant \#1201504.}

%\date{April 21, 2016}

%\dedicatory{This paper is dedicated to our advisors.}

\keywords{Two weight inequalities, Hardy Littlewood maximal function, dyadic sparse operators}

\maketitle

\begin{abstract}
We provide an explicit example of a pair of weights and a dyadic sparse operator for which the Hardy-Littlewood maximal function is bounded from $L^p(v)$ to $L^p(u)$ and from $L^{p'}(u^{1-p'})$ to $L^{p'}(v^{1-p'})$ while the  sparse operator is not bounded on the same spaces.  Our construction also provides an example of a single weight for which the weak-type endpoint does not hold for sparse operators.

\hfill

\end{abstract}

%-------------------------------------------------
% NEW SECTION: INTRODUCTION & STATEMENT OF MAIN RESULTS
%-------------------------------------------------
\section{Introduction and Statement of Main Results}

We are interested in the Muckenhoupt and Wheeden conjectures that relate weighted inequalities for the Hardy-Littlewood maximal operator and those for Calder\'on-Zygmund operators.   In one dimension, Reguera-Scurry \cite{RS} showed that there exists a pair of weights $(u,v)$ for which
\begin{equation}\label{maxbd}
M:L^p(v)\rightarrow L^p(u) \quad \text{and} \quad M:L^{p'}(u^{1-p'})\rightarrow L^{p'}(v^{1-p'})
\end{equation}
but at the same time the Hilbert transform is not bounded from $L^p(v)$ to $L^p(u)$.  Criado and F. Soria \cite{CS} extended this to higher dimensions by constructing a pair of weights for which \eqref{maxbd} held but that any Calder\'on-Zygmund operator in $\R^n$ associated to a certain non-degenerate kernel was not bounded from $L^p(v)$ to $L^p(u)$.  Specifically, it is shown in \cite{CS} that if $T$ is a non-degenerate Calder\'on-Zygmund operator, then there exists a pair of weights $(u,v)$ for which \eqref{maxbd} holds and a function $f\in L^p(v)$ such that $\|Tf\|_{L^p(u)}=\infty$.  Moreover, it turns out that the same construction of weights can be used to produce a weight $w$ for which $T$ is unbounded from $L^1(Mw)$ to $L^{1,\infty}(w)$.  This was shown first by Reguera \cite{R} for Haar shift operators and then by Reguera and Thiele for the Hilbert transform \cite{RT} and finally for non-degenerate Calder\'on-Zygmund operators by Criado and Soria \cite{CS}.

On a parallel note, sparse operators have turned out to be very important tools in harmonic analysis.  Let $\D$ be a dyadic grid in $\R^n$.  A collection $\mathcal{S}$ of cubes from $\D$ is called a sparse family if for any cube $Q\in\mathcal{S}$, there exist a subset $E_Q\subseteq Q$ such that $|Q|\leqslant2|E_Q|$, and the collection $\{E_Q\}_{Q\in\mathcal{S}}$ is pairwise disjoint.  Lerner and Nazarov \cite{LN} showed that this condition is equivalent to the Carleson condition:
$$\sum_{\substack{P\in \Sp \\ P\subset Q}} |P|\leq C|Q|, \qquad Q\in \D.$$

Given a sparse family $\mathcal{S}$, a sparse operator $T_\mathcal{S}$ is defined by
$$
T_\mathcal{S}f(x)=\sum_{Q\in \mathcal{S}}\frac{1}{|Q|}\int_Qf(y)\thinspace dy \thinspace \mathbbm{1}_Q(x).
$$
Lerner \cite{L} showed that if $T$ is a Calder\'on-Zygmund operator then  
$$
\|T\|_{L^p(v)\rightarrow L^p(u)}\lesssim \sup_\mathcal{S}\|T_\mathcal{S}\|_{L^p(v)\rightarrow L^p(u)}
$$
where the supremum is over all sparse families from a finite number of dyadic grids. The results of Reguera-Scurry and Criado-Soria and the above inequality implies that there exists a dyadic grid $\D$ such that
\begin{equation}\label{supinfty}\sup_\mathcal{S\subseteq \D}\|T_\mathcal{S}\|_{L^p(v)\rightarrow L^p(u)}=\infty\end{equation} 
where the supremum is over all sparse families from that dyadic grid.  However, this does not imply that there exist a single sparse family $\mathcal{S}$ for which the operator $T_\mathcal{S}$ is unbounded from $L^p(v)\rightarrow L^p(u)$.

Recently, Conde-Alonso and Rey \cite{CAR}; Lerner and Nazarov \cite{LN}; Lacey \cite{La}; Hyt\"onen, Roncal, and Tapiola \cite{HRT}, and Lerner \cite{L2} proved that Calder\'on-Zygmund operators are bounded pointwise by finitely many sparse operators.  Specifically, given a nice function $f$ there exist finitely many dyadic grids $\D^1,\ldots,\D^N$ and sparse families $\Sp^i\in \D^i$ such that
\begin{equation}\label{lacey}|Tf|\lesssim \sum_{i=1}^N T_{S^i}|f|.\end{equation}
Combining inequality \eqref{lacey} with the example of Criado and Soria shows that there is a pair of weights $(u,v)$ and a sparse operator $T_\Sp$ such that \eqref{maxbd} holds but there is a function $f\in L^p(v)$ for which $\|T_\Sp f\|_{L^p(u)}=\infty$.  Moreover, there exist a weight $w$ and a sparse operator $T_\Sp$ which is unbounded from $L^1(Mw)$ to $L^{1,\infty}(w)$.  However, the explicit sparse family $\Sp$ cannot be directly computed.  The sparse families from \eqref{lacey} depend on $Tf$ and are not explicit, rather they come from a stopping time argument based on the level sets of $f $.  We would also like to point readers to the recent work of Culiuc, Di Plinio, and Ou \cite{CDPO} who show that sparse domination of a bilinear form can be achieved with an explicit construction that is based upon the level sets of $Mf$ rather than $Tf$.

The main purpose of this note is to find an exact sparse family in $\mathbb{R}^n$ that answers the question. In fact we provide a simple sparse family for which the associated sparse operator is unbounded.  In addition, our example is purely dyadic and we do not need to exploit any cancelation properties of the operator.  The sparse family that we consider here is $\mathcal{S}=\bigcup_{k=0}^{\infty} \mathcal{S}_k$ in which
\begin{equation*}
\begin{split}
    & \mathcal{S}_0=\{[0,2^{-N})^n:\hspace{1mm} N=0,1,2,...\}, \\
    & \mathcal{S}_k=\{[2^k,2^k+2^{-N})^n:\hspace{1mm} N=0,1,2,...\} \text{ for } k\geqslant1.
\end{split}
\end{equation*}

Our main result is the following theorem..

\begin{theorem} \label{thmA}
For every $p\in (1,\infty)$, there exists a pair of weights $(u,v)$ such that
$$
M:\hspace{2mm}L^p(v)\rightarrow L^p(u)
$$
$$
M:\hspace{2mm}L^{p'}(u^{1-p'})\rightarrow L^{p'}(v^{1-p'})
$$
while $T_\mathcal{S}$ is unbounded from $L^p(v)$ to $L^p(u)$.
\end{theorem}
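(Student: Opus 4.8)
The plan is to exploit two structural features of the family $\mathcal{S}$. First, the towers $\mathcal{S}_k$ sit over the pairwise disjoint base cubes $Q_0^k=[2^k,2^k+1)^n$, so the operator decouples as $T_{\mathcal{S}}f=\sum_k T_{\mathcal{S}_k}f$ with $T_{\mathcal{S}_k}f$ supported in $Q_0^k$ and $T_{\mathcal{S}_{k'}}f=0$ off tower $k'$. Second, within one tower the cubes $Q_N^k=[2^k,2^k+2^{-N})^n$ are totally ordered by inclusion, so on each annulus $A_N^k=Q_N^k\setminus Q_{N+1}^k$ the operator reduces to a triangular partial–sum (discrete Hardy) operator in the depth variable $N$. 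I would first record these reductions together with the elementary fact that, for the relevant test functions (those constant $=f_N$ on $A_N^k$ with $(f_N)$ nonincreasing), one has $\langle f\rangle_{Q_j^k}\approx f_j$, hence $T_{\mathcal{S}_k}f\approx\sum_{j\le N}f_j$ on $A_N^k$. This turns $\|T_{\mathcal{S}_k}f\|_{L^p(u)}$ and $\|f\|_{L^p(v)}$ into weighted $\ell^p$ norms of the partial sums and of $(f_N)$, respectively.

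With this dictionary the whole problem becomes a family of one–dimensional discrete weighted inequalities. I would design, on each tower, the weight $v$ to grow geometrically toward the tip and $u$ to carry a small positive density on the annuli together with total mass $\approx 1$ concentrated near depth $L_k$, governed by the self-dual relation $v(A_N)\,\sigma(A_N)^{p-1}\approx|A_N|^{p}$ (with $\sigma=v^{1-p'}$), arranged so that $v(A_N)\approx 1$ and $\sigma(A_N)\approx|A_N|^{p'}$. For this profile the two–weight quantities $\langle u\rangle_{Q_j^k}\langle\sigma\rangle_{Q_j^k}^{p-1}$ stay uniformly bounded, while the test function $f_k=\mathbf 1_{Q_0^k\setminus Q_{L_k}^k}$ gives $T_{\mathcal{S}_k}f_k\approx\min(N+1,L_k)$ on $A_N^k$, whence $\|T_{\mathcal{S}_k}f_k\|_{L^p(u)}\gtrsim L_k^{1/p'}\,\|f_k\|_{L^p(v)}$; equivalently, the discrete Muckenhoupt condition for the partial–sum operator fails at rate $L_k$. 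Taking $L_k\to\infty$ and $f=\sum_k\lambda_k f_k$ with disjoint supports, the decoupling yields $\|f\|_{L^p(v)}^p=\sum_k\lambda_k^p$ and $\|T_{\mathcal{S}}f\|_{L^p(u)}^p=\sum_k\lambda_k^p\|T_{\mathcal{S}_k}f_k\|_{L^p(u)}^p$; choosing $\lambda_k$ so that $\sum_k\lambda_k^p<\infty$ but $\sum_k\lambda_k^p L_k^{p-1}=\infty$ produces $f\in L^p(v)$ with $\|T_{\mathcal{S}}f\|_{L^p(u)}=\infty$, the desired unboundedness.

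The substantive step is showing that the same pair $(u,v)$ keeps $M$ bounded in both directions, which I would verify through Sawyer's testing conditions $\int_Q M(\sigma\mathbf 1_Q)^p\,u\lesssim\sigma(Q)$ and its dual $\int_Q M(u\mathbf 1_Q)^{p'}\,v^{1-p'}\lesssim u(Q)$. The conceptual reason $M$ survives where $T_{\mathcal{S}}$ fails is that on a tower $Mf\approx\sup_{j\le N}f_j$ rather than a sum, and $\sup_{j\le N}f_j\le\|f\|_{L^p(v)}$ since $v(A_j)\approx 1$; thus $\int (Mf)^p u$ is controlled by $\|f\|_{L^p(v)}^p$ times the \emph{total} mass $u(Q_0^k)\approx 1$, with a constant independent of the depth $L_k$. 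In other words $M$ feels only the finite total mass of the output weight, whereas $T_{\mathcal{S}}$ feels the accumulation of the partial sums — precisely the gap being exploited.

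The main obstacle is making this verification uniform in $k$ and handling cubes $Q$ that are not subordinate to a single tower. For the latter I would use the separation of the base cubes: a cube meeting several towers has side $\gtrsim 1$, over which the averages of the dyadically concentrated weights are bounded, so such cubes contribute only a harmless global term, while cubes inside one tower reduce to the single–tower discrete estimate. The delicate case is the dual condition for $p\ge 2$, where the crude $\sup\le\ell^p$ bound is too lossy; there I expect to need the positive annular density of $u$ (rather than a pure tip mass) together with the precise geometric rates above, reducing the claim to a discrete Hardy inequality for the conjugate exponent whose Muckenhoupt constant must be checked to remain bounded uniformly in $L_k$. Assembling these uniform local bounds with the global term establishes \eqref{maxbd} and completes the proof.
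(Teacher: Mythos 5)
Your reduction to a per-tower discrete Hardy operator is accurate, but the weight design you propose fails the very verification you defer, and the failure is not repairable. Take your profile $v(A_N^k)\approx 1$, so $\sigma(A_N^k)=v^{1-p'}(A_N^k)\approx |A_N^k|^{p'}=2^{-Nnp'}$, with $u(Q_0^k)\approx 1$ carried near depth $L_k$. For $x\in A_N^k$ with $N\le L_k$ one has $M(u\mathbbm{1}_{Q_0^k})(x)\ge \langle u\rangle_{Q_N^k}\approx 2^{Nn}$, so the dual testing integral satisfies
\begin{equation*}
\int_{Q_0^k} M(u\mathbbm{1}_{Q_0^k})^{p'}\,v^{1-p'}\,dx\ \gtrsim\ \sum_{N\le L_k} 2^{Nnp'}\,2^{-Nnp'}\ =\ L_k\ \gg\ u(Q_0^k),
\end{equation*}
i.e.\ the second bound in \eqref{maxbd} fails with norm $\gtrsim L_k^{1/p'}$ --- exactly the gain you extract for $T_{\mathcal S_k}$. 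Rescaling $u(Q_0^k)\approx L_k^{1-p}$ to restore this condition cancels the gain: $\|T_{\mathcal S_k}f_k\|_{L^p(u)}^p\approx L_k^p\,u(Q_0^k)\approx L_k\approx \|f_k\|_{L^p(v)}^p$. This is not bad luck with one profile: because each $\mathcal S_k$ is a single nested chain, for $x$ in the annulus $A_N^k$ one can split
\begin{equation*}
T_{\mathcal S_k}f(x)=\sum_{j\le N}\langle f\rangle_{Q_j^k}\ \lesssim\ Mf(x)+\sum_{N'<N}2^{N'n}\int_{A_{N'}^k}f(y)\,dy,
\end{equation*}
and pairing the second (dual Hardy) piece with $gu\ge 0$, using $2^{N'n}\int_{Q_{N'+1}^k}gu\le \langle gu\rangle_{Q_{N'}^k}\le M(gu)(y)$ for $y\in A_{N'}^k$, gives $\int (T_{\mathcal S_k}f)\,gu\lesssim \|Mf\|_{L^p(u)}\|g\|_{L^{p'}(u)}+\|f\|_{L^p(v)}\|M(gu)\|_{L^{p'}(v^{1-p'})}$. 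Thus the two maximal bounds you are trying to preserve \emph{imply} $\|T_{\mathcal S_k}\|_{L^p(v)\to L^p(u)}\lesssim 1$ uniformly in $k$, and your own decoupling then bounds $T_{\mathcal S}$ globally. Within your framework (test functions and weights living on the towers, with the norm inequalities holding for all admissible supported functions), the per-tower blow-up $L_k\to\infty$ you need is therefore impossible; this is the classical phenomenon that for Hardy-type (linearly ordered) positive operators, two-sided maximal bounds already control the operator.

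The paper's construction is structured precisely to escape this trap, and comparing it with your plan shows what is missing. The gain in Lemma \ref{lem2} does not come from weights designed as functions of the depth $N$ alone: the input is the self-similar Cantor-type weight $w_k$, whose mass at each generation is spread over $2^{(k-1)n}$ sibling cubes, so that the tower averages $\langle w_k\rangle_{[0,2^{-N})^n}$ stay flat across $k$ consecutive scales per generation while $Mw_k\lesssim w_k$ holds on $\supp w_k$ (Lemma \ref{lem1}); each tower then contributes a factor $k$, and divergence is produced by summing over $k$ with coefficients $k^{-\epsilon}$, not by an unbounded single-tower norm. Crucially, the output weights $u\approx v\approx w$ are supported only on the marked cubes $\Gamma(k)=\bigcup_m Q_1^m(k)$, while the test function $f=\sum_k k^{-\epsilon}A_k w_k(\cdot-\vec{2^k})$ charges the \emph{whole} Cantor support of $w_k$, which is strictly larger than $\supp w$; if one restricts $f$ to $\supp w$, the averages over the tower cubes collapse geometrically and the factor $k$ disappears --- consistent with the obstruction above, which in fact shows that this enlargement of the support of the test function relative to $\supp u=\supp v$ is not a convenience but a necessity for any counterexample built over this particular family $\mathcal S$. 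Your proposal, which keeps all functions and weights subordinate to the towers and insists on uniform two-sided maximal bounds there, cannot be completed.
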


As we will see later that $v\sim u$ a.e. on the support of $u$, the proof of Theorem \ref{thmA} can be modified to provide a counterexample in one-weight settings with only minor modifications needed. We note here that the weights mentioned in this paper vanish on the sets of positive measure. We have the following result.

\begin{theorem} \label{thmB}
For every $p\in (1,\infty)$, there exists a weight $w$ such that
$$
M:\hspace{2mm}L^p(w)\rightarrow L^p(w)
$$
while $T_\mathcal{S}$ is unbounded from $L^p(w)$ to $L^p(w)$.
\end{theorem}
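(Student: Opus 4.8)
The plan is to read off Theorem~\ref{thmB} from the construction behind Theorem~\ref{thmA} by taking the single weight to be $w=u$. The only inputs I need are the three facts already available from that construction: the comparability $v\sim u$ a.e.\ on $S:=\supp u$, the maximal bound $M\colon L^p(v)\to L^p(u)$, and the unboundedness of $T_\Sp\colon L^p(v)\to L^p(u)$. Because $w=u$ vanishes on a set of positive measure, the assertion $M\colon L^p(w)\to L^p(w)$ is understood as the weighted inequality $\int (Mf)^p u\le C\int |f|^p u$, and every estimate below is carried out for functions supported on $S$, which is all that the norm of $L^p(w)$ detects.

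First I would transfer the maximal bound. For $f$ supported on $S$, the bound from Theorem~\ref{thmA} together with $v\lesssim u$ on $S$ gives
\[
\int (Mf)^p\,u \;\le\; C\int |f|^p\,v \;=\; C\int_S |f|^p\,v \;\le\; C'\int_S |f|^p\,u \;=\; C'\int |f|^p\,w ,
\]
so $M$ obeys the required inequality on $L^p(w)$.

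Next I would transfer the unboundedness. The extremizing sequence $f_j$ produced for Theorem~\ref{thmA} is supported near the corners of the towers $\Sp_k$, hence inside $S$, and satisfies $\|T_\Sp f_j\|_{L^p(u)}/\|f_j\|_{L^p(v)}\to\infty$. Using the reverse comparability $u\lesssim v$ on $S$ for these same $f_j$,
\[
\frac{\|T_\Sp f_j\|_{L^p(w)}}{\|f_j\|_{L^p(w)}} \;=\; \frac{\|T_\Sp f_j\|_{L^p(u)}}{\|f_j\|_{L^p(u)}} \;\gtrsim\; \frac{\|T_\Sp f_j\|_{L^p(u)}}{\|f_j\|_{L^p(v)}} \;\longrightarrow\; \infty ,
\]
which shows $T_\Sp$ is unbounded on $L^p(w)$.

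The point that needs care---and the reason the statement is not self-contradictory---is the degeneracy of $w=u$. Since $w$ vanishes on a positive measure set, $w\notin A_p$; this is precisely what leaves room for $T_\Sp$ to be unbounded while the maximal inequality persists, since both the equivalence ``$M$ bounded $\Leftrightarrow A_p$'' and the $A_p$-boundedness of sparse operators presuppose a weight that is positive a.e. The two things I must therefore pin down are that the maximal inequality is invoked only for $f$ supported on $S$, and that the extremizers of Theorem~\ref{thmA} may genuinely be taken supported on $S$; both follow from $v\sim u$ on $S$ and from the mass of $u$ being concentrated at the tower corners. Granting these, the two displays finish the proof with no additional computation.
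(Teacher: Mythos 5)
Your proposal is correct and takes essentially the same route as the paper, whose entire proof of Theorem~\ref{thmB} is the remark that $v\sim u$ a.e.\ on $\supp u$ (a consequence of Lemma~\ref{lem3}, since $v=(Mw/w)^pw$) allows one to take the single weight $w=u$ and transfer both conclusions of Theorem~\ref{thmA}, exactly as you do. One small repair: the paper never produces a primal extremizing sequence ``concentrated at the tower corners'' (its unboundedness witness is a dual test function spread over the full supports of the $w_k$'s), but your argument does not need that claim, because by the paper's convention $L^p(v)$ contains only functions supported in $\supp v=\supp u$, so any sequence witnessing the unboundedness in Theorem~\ref{thmA} is automatically supported on $S$.
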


As an immediate consequence of Theorem \ref{thmA} and an extrapolation argument of C. Per\'ez and D. Cruz-Uribe \cite{PC}, we have the following result.

\begin{cor} \label{thmC}
There exists a weight $w$ such that
$T_\mathcal{S}$ is unbounded from $L^1(Mw)$ to $L^{1,\infty}(w)$.
\end{cor}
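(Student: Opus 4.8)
The plan is to deduce the corollary from Theorem~\ref{thmA} by contraposition, invoking the two-weight extrapolation theorem of Cruz-Uribe and P\'erez \cite{PC}. Recall that their result asserts the following implication: if an operator $T$ satisfies the weak-type endpoint estimate
\begin{equation*}
\|Tf\|_{L^{1,\infty}(w)} \lesssim \|f\|_{L^1(Mw)}
\end{equation*}
for every weight $w$, then for each $p\in(1,\infty)$ and every pair of weights $(u,v)$ satisfying the two maximal-function bounds in \eqref{maxbd}, one has $T:L^p(v)\rightarrow L^p(u)$. Since the sparse operator $T_\mathcal{S}$ is a positive linear operator, it falls squarely within the scope of this extrapolation scheme, which in its sharpest form is phrased for arbitrary pairs of functions and requires no cancellation of the operator.

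First I would fix any $p\in(1,\infty)$ and apply Theorem~\ref{thmA} to produce a pair of weights $(u,v)$ for which both conditions in \eqref{maxbd} hold yet $T_\mathcal{S}$ fails to be bounded from $L^p(v)$ to $L^p(u)$. This exhibits a concrete failure of the \emph{conclusion} of the Cruz-Uribe--P\'erez extrapolation theorem for the operator $T_\mathcal{S}$. By contraposition, the hypothesis of that theorem cannot hold: it is therefore not true that $T_\mathcal{S}$ maps $L^1(Mw)\rightarrow L^{1,\infty}(w)$ uniformly over all weights $w$. Consequently there must exist at least one weight $w$ for which the weak-type endpoint inequality fails, i.e.\ $T_\mathcal{S}$ is unbounded from $L^1(Mw)$ to $L^{1,\infty}(w)$, which is exactly the assertion of the corollary.

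The argument is short precisely because all of the analytic work has already been carried out in the proof of Theorem~\ref{thmA}; the only point requiring care is verifying that the extrapolation theorem of \cite{PC} applies, in the precise form stated above, to a positive operator such as $T_\mathcal{S}$. The main (and rather minor) obstacle is thus bookkeeping: confirming that the class of admissible pairs $(u,v)$ appearing in \eqref{maxbd} is exactly the class entering the extrapolation hypothesis, so that the logical contrapositive is valid, and observing that the conclusion furnishes the existence of a single bad weight rather than a uniform failure. No explicit construction of $w$ is needed here---indeed, the strength of the extrapolation method is that it produces the counterexample weight abstractly from the two-weight failure supplied by Theorem~\ref{thmA}.
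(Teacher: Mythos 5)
Your proposal is correct and is precisely the argument the paper intends: the corollary is stated as an ``immediate consequence'' of Theorem~\ref{thmA} combined with the Cruz-Uribe--P\'erez extrapolation theorem of \cite{PC}, applied in contrapositive form exactly as you describe (with the self-adjointness of $T_\mathcal{S}$ ensuring that the single endpoint hypothesis covers both the operator and its adjoint, so the strong-type conclusion under \eqref{maxbd} is available). The paper supplies no further detail, so your spelled-out contraposition matches its proof.
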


%-------------------------------------------------
% NEW SECTION: PRELIMINARIES
%-------------------------------------------------
\section{Preliminaries}

Since we will be working with weights that vanish on sets of positive measure, we will define $L^p(w)$ to be all functions whose support is contained in the support of $w$ and $\|f\|_{L^p(w)}<\infty$.  This convention is also used in \cite{CS}.  Unlike the Hilbert transform and other Calder\'on-Zygmund operators, sparse operators do not have any cancellation properties to exploit. For this reason, we are going to construct a purely dyadic weight.  %that is in fact an $n$-dimensional dyadic version of the ones appearing in \cite{CS}, \cite{RS}, and \cite{RT}. 
This construction is to reduce unnecessary complications of the weight and, at the same time, simplify our later calculations.

\hfill

For each integer $k$, we start with the unit cube $P^0_1=[0,1)^n$ and a weight $w_k^0$ supported on it with $w_k^0(P^0_1)=1$ (see Figure \ref{fig1}). We will gradually modify this weight to obtain the desired one.

\hfill

\begin{figure}
\setlength{\unitlength}{1.85mm}
\begin{picture}(64,7)
\put(0,0){\line(1,0){64}}
\put(0,5){\line(1,0){64}}
\put(0,-0.5){\line(0,1){1}}
\put(64,-0.5){\line(0,1){1}}
\put(-1,-2){0}
\put(64.2,-2){1}
%\put(7,-5){$\textit{Figure 1 -- A visualization of } \hspace{1mm} w_k^0 \textit{ when } n=1, \textit{ and } \thinspace k=3$.}
\end{picture}
\vspace{3mm}

\caption{A visualization of $w_k^0$ when $n=1$ and $k=3$}
\label{fig1}
\end{figure}
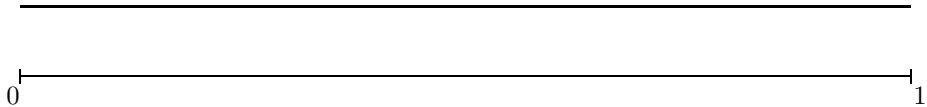

\textbf{Step 1:} We equally decompose the unit cube into $2^{kn}$ sub-cubes and consider the collection $\mathscr{P}^1=\bigl\{2^{-k}\bigl((z_1,...,z_n)+[0,1)^n\bigr): \hspace{1mm} z_i=0,1,...,2^{k-1}-1\bigr\}$. The union of this collection is, in fact, $[0,\frac{1}{2})^n$. We can reindex the cubes in this collection to be $\mathscr{P}^1=\{P^1_i: \hspace{1mm} i=1,2,...,2^{(k-1)n}\}$, in which $P^1_1=[0,2^{-k})^n$. Let $Q_1^1=[\frac{1}{2},\frac{1}{2}+2^{-k})^n$, and $\mathscr{Q}^1=\{Q^1_1\}$. We then define $w_k^1$ to be uniformly distributed on $[0,\frac{1}{2})^n\cup Q_1^1$ so that $w_k^1\bigl([0,\frac{1}{2})^n\cup Q_1^1\bigr)=w_k^0([0,1)^n)=1$ (see Figure \ref{fig2}).

\hfill

\begin{figure}
\setlength{\unitlength}{1.85mm}
\begin{picture}(64,11)
\put(0,0){\line(1,0){64}}
\put(0,8){\line(1,0){40}}
\put(64,-0.5){\line(0,1){1}}
\multiput(0,-0.5)(8,0){6}{\line(0,1){1}}
\linethickness{0.9mm}
\put(32,0){\line(1,0){8}}
\put(-1,-2){0}
\put(64.2,-2){1}
\put(31.4,-3){$\frac{1}{2}$}
\put(39.4,-3){$\frac{1}{2}+2^{-k}$}
\put(36,2){$Q^1_1$}
%\put(7,-7){$\textit{Figure 2 -- A visualization of } \hspace{1mm} w_k^1 \textit{ when } n=1, \textit{ and } \thinspace k=3$.}
\end{picture}
\vspace{3mm}
\caption{A visualization of $w_k^1$ when $n=1$ and $k=3$}
\label{fig2}
\end{figure}
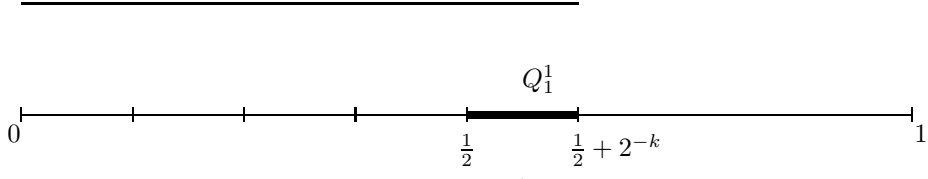

\textbf{Step 2:} For each cube in the collection $\mathscr{P}^1$, say $P^1_i$, we will treat it just as a smaller scale of the unit cube. To be more precise, we let Ver($P^1_i$) denote the vertex of $P^1_i$ that is closest to the origin, and consider the collection Ver$(P^1_i)+2^{-k}\bigl(\mathscr{P}^1\cup\{Q^1_1\}\bigr)$ of smaller cubes contained inside $P^1_i$. We then iterate the process of step 1 into $P^1_i$. Let $\mathscr{P}^2=\bigcup_{i=1}^{2^{(k-1)n}} \left(\text{Ver}(P^1_i)+2^{-k}\mathscr{P}^1\right)$ and $\mathscr{Q}^2=\left\{\text{Ver}(P^1_i)+2^{-k}Q^1_1:\hspace{1mm} i=1,2,...,2^{(k-1)n}\right\}$. We reindex these collections so that $$\mathscr{P}^2=\{P^2_i:\hspace{1mm} i=1,2,...,2^{(k-1)2n}\}\quad
\text{and} \quad \mathscr{Q}^2=\{Q^2_i:\hspace{1mm} i=1,2,...,2^{(k-1)n}\}$$ 
where $Q^2_1=2^{-k}\thinspace Q^1_1=[\frac{1}{2}2^{-k},\frac{1}{2}2^{-k}+2^{-2k})^n$. We then define $w^2_k$ to be uniformly distributed, so that $w_k^2\left(\bigcup_{R\in\mathscr{P}^2\cup\mathscr{Q}^2}R\right)=w_k^1\left(\bigcup_{R\in\mathscr{P}^1}R\right)$ (see Figure \ref{fig3}).

\hfill

\setlength{\unitlength}{1.85mm}
\begin{figure}
\begin{picture}(64,16)
\put(0,0){\line(1,0){64}}
\put(32,8){\line(1,0){8}}
\multiput(0,12.8)(8,0){4}{\line(1,0){5}}
\multiput(0,-0.5)(8,0){6}{\line(0,1){1}}
\multiput(0,0)(8,0){4}{\multiput(1,-0.5)(1,0){5}{\line(0,1){1}}}
\linethickness{0.9mm}
\put(32,0){\line(1,0){8}}
\multiput(4,0)(8,0){4}{\line(1,0){1}}
\put(-1,-2){0}
\put(64.2,-2){1}
\put(31.4,-3){$\frac{1}{2}$}
\put(39.4,-3){$\frac{1}{2}+2^{-k}$}
\put(36,2){$Q^1_1$}
\put(3.8,2){$Q^2_1$}
\put(27.8,2){$Q^2_{2^{(k-1)n}}$}
%\put(7,-7){$\textit{Figure 3 -- A visualization of } \hspace{1mm} w_k^2 \textit{ when } n=1, \textit{ and } \thinspace k=3$.}
\end{picture}
\vspace{3mm}
\caption{A visualization of $w_k^2$ when $n=1$ and $k=3$}
\label{fig3}
\end{figure}

\begin{figure}
\setlength{\unitlength}{1.85mm}
\begin{picture}
(64,25)
\put(0,0){\line(1,0){64}}
\put(32,8){\line(1,0){8}}
\multiput(4,12.8)(8,0){4}{\line(1,0){1}}
\multiput(0,0)(8,0){4}{\multiput(0.5,22.48)(1,0){4}{\line(1,0){0.125}}}
\multiput(0,-0.5)(8,0){6}{\line(0,1){1}}
\multiput(0,0)(8,0){4}{\multiput(1,-0.5)(1,0){5}{\line(0,1){1}}}
\linethickness{0.9mm}
\put(32,0){\line(1,0){8}}
\multiput(4,0)(8,0){4}{\line(1,0){1}}
\put(-1,-2){0}
\put(64.2,-2){1}
\put(31.4,-3){$\frac{1}{2}$}
\put(39.4,-3){$\frac{1}{2}+2^{-k}$}
\put(36,2){$Q^1_1$}
\put(3.8,2){$Q^2_1$}
\put(27.8,2){$Q^2_{2^{(k-1)n}}$}
%\put(7,-7){$\textit{Figure 4 -- A visualization of } \hspace{1mm} w_k \textit{ when } n=1, \textit{ and } \thinspace k=3$.}
\end{picture}
\vspace{3mm}
\caption{A visualization of $w_k$ when $n=1$ and $k=3$}
\label{fig4}
\end{figure}

We may recursively build a sequence of weights $\{w_k\}$ so that each $w_k([0,1]^n)=1$.   The family of weights $w_k$ (see Figures ~\ref{fig4} and \ref{fig5}) is  supported on $\bigcup_{m=1}^{\infty}\bigcup_{i=1}^{2^{(k-1)(m-1)n}}Q_i^m$, and by induction we have

$$w_k(x)=\sum_{m=1}^{\infty}\left(\frac{2^{kn}}{2^{(k-1)n}+1}\right)^m\mathbbm{1}_{\Omega_m}(x)$$
where $\Omega_m=\bigcup_{i=1}^{2^{(k-1)(m-1)n}}Q^m_i$, $|Q_i^m|=2^{-kmn}$, and $w_k(Q_i^m)=(2^{(k-1)n}+1)^{-m}$.  These sets depend on $k$, but we suppress the index $k$ for simplicity.  We will write $Q^m_i(k)$ and $\Omega_m(k)$ when the dependence on $k$ is needed.  Finally, we also note that the family of cubes $\{Q^m_i\}$ is disjoint.  This fact will be used later in our calculations.

\hfill

Completely similar to what has been proved in \cite{CS} and \cite{RS}, we have the following lemma.

\begin{lemma} \label{lem1}
Let $M$ be the Hardy-Littlewood maximal function, then for all $x\in \cup_{m=1}^{\infty}\Omega_m$, we have
$$M(w_k)(x)\leqslant 9^n \thinspace w_k(x).$$
\end{lemma}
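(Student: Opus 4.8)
The plan is to fix a point $x\in\bigcup_m\Omega_m$, say $x\in Q^{m_0}_{i_0}$, so that by the density formula $w_k(x)=\lambda^{m_0}$ with $\lambda:=2^{kn}/(2^{(k-1)n}+1)\geq 1$. Writing $s_m:=2^{-km}$ for the common side length of all level-$m$ cubes, it suffices to show that every cube $Q\ni x$ satisfies $\frac{1}{|Q|}\int_Q w_k\le 9^n\lambda^{m_0}$. I would split the argument according to whether $\ell(Q)\geq s_{m_0}$ or $\ell(Q)<s_{m_0}$; the case $\ell(Q)\geq 1$ is trivial, since then $\frac{1}{|Q|}\int_Q w_k\le w_k(\R^n)/|Q|\le 1\le\lambda^{m_0}$.

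First I would establish the uniform mass bound that drives the large-cube case: \emph{every} dyadic cube $C$ of side $s_m$ satisfies $w_k(C)\le (2^{(k-1)n}+1)^{-m}$. This follows from the stated disjointness of the family $\{Q^m_i\}$ together with the exact masses $w_k(Q^m_i)=(2^{(k-1)n}+1)^{-m}$: since all the $P$- and $Q$-cubes are standard dyadic cubes, a mesh-$s_m$ cell $C$ either coincides with a level-$m$ cube (mass exactly $(2^{(k-1)n}+1)^{-m}$, counting all deeper support it contains), or lies inside a single terminal coarser cube $Q^j_i$ with $j<m$ (mass $\lambda^{j}s_m^{\,n}=(2^{(k-1)n}+1)^{-m}\lambda^{j-m}\le(2^{(k-1)n}+1)^{-m}$), or is disjoint from $\operatorname{supp}w_k$. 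Now if $s_{m}\le\ell(Q)<s_{m-1}$ with $m\le m_0$ (which holds whenever $\ell(Q)\ge s_{m_0}$), then $Q$ meets at most $(\ell(Q)/s_m+1)^n\le 2^n(\ell(Q)/s_m)^n$ such cells, so $\frac{1}{|Q|}\int_Q w_k\le 2^n(2^{(k-1)n}+1)^{-m}/s_m^{\,n}=2^n\lambda^{m}\le 2^n\lambda^{m_0}$.

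The remaining case $\ell(Q)<s_{m_0}$ is the heart of the matter, because here the generic mass bound overestimates the true density (a mesh cell deep inside the terminal cube $Q^{m_0}_{i_0}$ carries far less than $(2^{(k-1)n}+1)^{-m}$). The key geometric input I would prove is a \emph{shielding} estimate: $Q^{m_0}_{i_0}$ is surrounded by an empty collar of width at least $s_{m_0}/4$ (assuming $k\ge 2$). Indeed, in its parent's normalized coordinates $Q^{m_0}_{i_0}$ occupies the corner cube $[\tfrac12,\tfrac12+2^{-k})^n$, while the parent's support is confined to $[0,\tfrac12)^n\cup[\tfrac12,\tfrac12+2^{-k})^n$; every neighboring mesh-$s_{m_0}$ cell therefore has some coordinate $\ge\tfrac12$ that keeps it out of the subdivided lower-left half and hence is empty, except for the single cell meeting $Q^{m_0}_{i_0}$ at the corner $(\tfrac12,\dots,\tfrac12)$, which is a scaled copy whose own support retreats into its $[0,\tfrac12+2^{-k})^n$ and thus stays at sup-distance $\ge s_{m_0}/4$ from that corner. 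With this collar, a cube $Q\ni x$ of side $\ell(Q)\le s_{m_0}/4$ meets only the uniform density $\lambda^{m_0}$, giving $\frac{1}{|Q|}\int_Q w_k\le\lambda^{m_0}$, while a cube of side $s_{m_0}/4<\ell(Q)<s_{m_0}$ meets at most $2^n$ level-$m_0$ cells and has volume at least $(s_{m_0}/4)^n$, giving $\frac{1}{|Q|}\int_Q w_k\le 2^n\cdot 4^n\,\lambda^{m_0}=8^n\lambda^{m_0}$. All regimes are bounded by $9^n\lambda^{m_0}=9^nw_k(x)$, completing the proof. The main obstacle is precisely this shielding estimate: one must check, uniformly across dimensions and across every parent cube, that the self-similar placement of the $Q$-cubes really does leave a collar proportional to $s_{m_0}$, since otherwise a tiny cube could straddle $Q^{m_0}_{i_0}$ and a much denser deeper cube and make the average explode.
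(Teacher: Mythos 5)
Your proof is correct, and it is worth noting that the paper itself supplies no argument for this lemma: it simply declares it ``completely similar'' to the proofs in \cite{CS} and \cite{RS}, so your write-up is a genuine self-contained verification rather than a variant of an in-paper proof. Your two ingredients are exactly what makes those cited arguments work, transplanted to this dyadic setting: (i) the uniform mass bound $w_k(C)\leq(2^{(k-1)n}+1)^{-m}=\lambda^m s_m^{\,n}$ for every mesh-$s_m$ cell $C$, whose trichotomy is airtight because all $P$- and $Q$-cubes are aligned to the $s_m$-mesh and the redistribution in the construction preserves the mass of each $P$-cube; and (ii) the shielding collar, which I checked in detail: in parent coordinates every neighbor cell of $Q^{m_0}_{i_0}$ other than the diagonal cell $D$ has some coordinate $\geq\tfrac12$ and so misses both the subdivided half $[0,\tfrac12)^n$ and $Q^{m_0}_{i_0}$, while the support inside $D$ is confined to $\mathrm{Ver}(D)+2^{-k}\bigl[0,\tfrac12+2^{-k}\bigr)^n$ and hence retreats to sup-distance $s_{m_0}(\tfrac12-2^{-k})\geq s_{m_0}/4$ for $k\geq 2$, with the boundary case $k=2$ saved by the half-open cubes. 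Your case analysis ($\ell(Q)\geq 1$; $s_{m_0}\leq\ell(Q)<1$ giving $2^n\lambda^{m_0}$; $s_{m_0}/4<\ell(Q)<s_{m_0}$ giving $8^n\lambda^{m_0}$; $\ell(Q)\leq s_{m_0}/4$ giving $\lambda^{m_0}$) is exhaustive and lands under $9^n$ as claimed. The one caveat is your standing assumption $k\geq 2$: for $k=1$ the collar degenerates (the terminal cubes touch the deeper structure corner-to-corner, e.g.\ $Q^2_1=[\tfrac14,\tfrac12)^n$ meets $Q^1_1=[\tfrac12,1)^n$ at $(\tfrac12,\ldots,\tfrac12)$), so your argument does not literally cover that case; this is harmless here, since Lemma \ref{lem2} and the weight $w$ in the theorems only ever use $w_k$ with $k\geq 3$, but you should state the restriction explicitly.
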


In this paper, we will make use of the following well known result: for any pair of weights $(v,w)$, the followings are equivalent.
\begin{equation*}
\begin{split}
    \|Tf\|_{L^p(w)} & \lesssim \|f\|_{L^p(v)} \\
    \|T^*f\|_{L^{p'}(v^{1-p'})} & \lesssim \|f\|_{L^{p'}(w^{1-p'})}
\end{split}    
\end{equation*}
where we define $v^{1-p'}=v^{1-p'}\mathbbm{1}_{\supp(v)}$ and $w^{1-p'}=w^{1-p'}\mathbbm{1}_{\supp(w)}$.

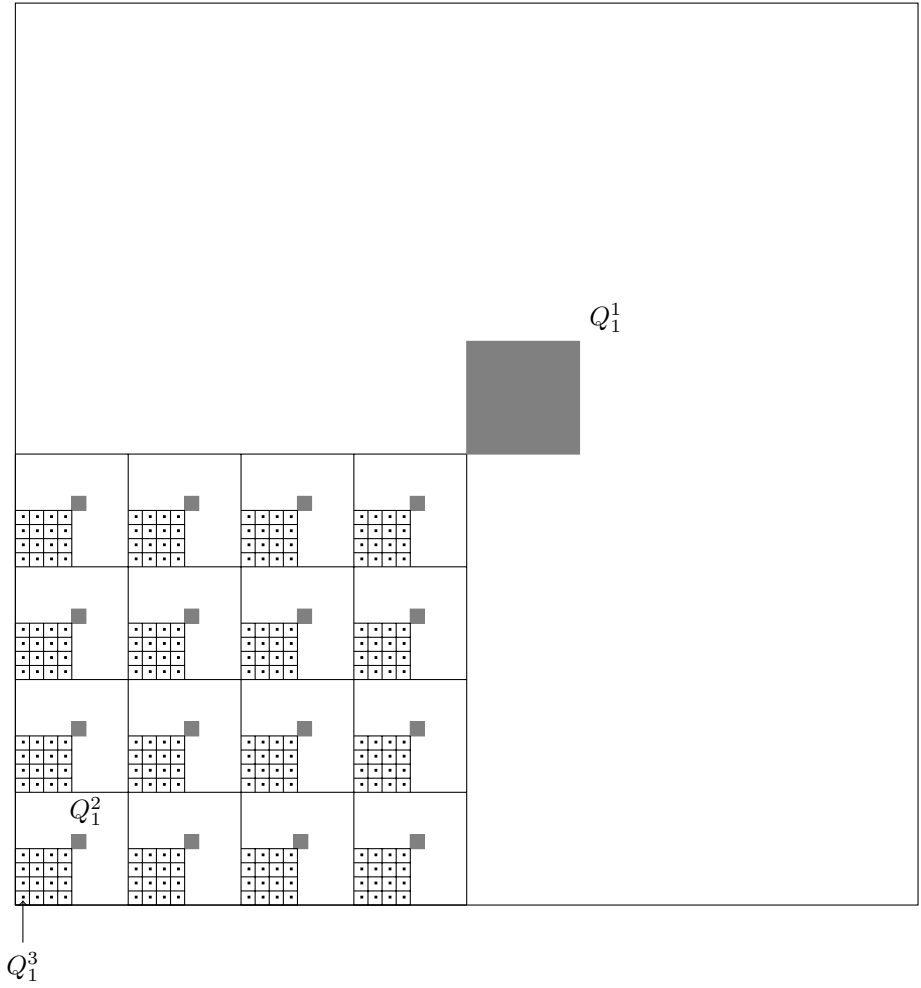
\begin{figure}
\begin{tikzpicture}
%\draw[black, thin] %(3.5mm,-12mm) node[anchor= west] {$\textit{Figure 5 -- A visualization of the support of} \hspace{2mm} w_k \textit{ when } n=2, \textit{ and } \thinspace k=3$.};
\draw[black, thin] (0mm,0mm) rectangle (120mm,120mm);
\draw[step=15mm, black, thin](0,0) grid (60mm,60mm);
\filldraw[gray, thin] (60mm,60mm) rectangle +(15mm,15mm) node[black, thick, anchor=south west] {$Q_1^1$};
\filldraw[gray, thin] (7.5mm,7.5mm) rectangle (9.375mm,9.375mm) node[black, thick, anchor=south] {$Q_1^2$};
\filldraw[gray, thin] (22.5mm,7.5mm) rectangle +(1.875mm,1.875mm);
\filldraw[gray, thin] (37.mm,7.5mm) rectangle +(1.875mm,1.875mm);
\filldraw[gray, thin] (52.5mm,7.5mm) rectangle +(1.875mm,1.875mm);
\filldraw[gray, thin] (7.5mm,22.5mm) rectangle +(1.875mm,1.875mm);
\filldraw[gray, thin] (7.5mm,37.5mm) rectangle +(1.875mm,1.875mm);
\filldraw[gray, thin] (7.5mm,52.5mm) rectangle +(1.875mm,1.875mm);
\filldraw[gray, thin] (22.5mm,22.5mm) rectangle +(1.875mm,1.875mm);
\filldraw[gray, thin] (22.5mm,37.5mm) rectangle +(1.875mm,1.875mm);
\filldraw[gray, thin] (22.5mm,52.5mm) rectangle +(1.875mm,1.875mm);
\filldraw[gray, thin] (37.5mm,22.5mm) rectangle +(1.875mm,1.875mm);
\filldraw[gray, thin] (37.5mm,37.5mm) rectangle +(1.875mm,1.875mm);
\filldraw[gray, thin] (37.5mm,52.5mm) rectangle +(1.875mm,1.875mm);
\filldraw[gray, thin] (52.5mm,22.5mm) rectangle +(1.875mm,1.875mm);
\filldraw[gray, thin] (52.5mm,37.5mm) rectangle +(1.875mm,1.875mm);
\filldraw[gray, thin] (52.5mm,52.5mm) rectangle +(1.875mm,1.875mm);
\draw[step=1.875mm, black, thin](0mm,0mm) grid +(7.5mm,7.5mm);
\draw[step=1.875mm, black, thin](0mm,15mm) grid +(7.5mm,7.5mm);
\draw[step=1.875mm, black, thin](0mm,30mm) grid +(7.5mm,7.5mm);
\draw[step=1.875mm, black, thin](0mm,45mm) grid +(7.5mm,7.5mm);
\draw[step=1.875mm, black, thin](15mm,0mm) grid +(7.5mm,7.5mm);
\draw[step=1.875mm, black, thin](15mm,15mm) grid +(7.5mm,7.5mm);
\draw[step=1.875mm, black, thin](15mm,30mm) grid +(7.5mm,7.5mm);
\draw[step=1.875mm, black, thin](15mm,45mm) grid +(7.5mm,7.5mm);
\draw[step=1.875mm, black, thin](30mm,0mm) grid +(7.5mm,7.5mm);
\draw[step=1.875mm, black, thin](30mm,15mm) grid +(7.5mm,7.5mm);
\draw[step=1.875mm, black, thin](30mm,30mm) grid +(7.5mm,7.5mm);
\draw[step=1.875mm, black, thin](30mm,45mm) grid +(7.5mm,7.5mm);
\draw[step=1.875mm, black, thin](45mm,0mm) grid +(7.5mm,7.5mm);
\draw[step=1.875mm, black, thin](45mm,15mm) grid +(7.5mm,7.5mm);
\draw[step=1.875mm, black, thin](45mm,30mm) grid +(7.5mm,7.5mm);
\draw[step=1.875mm, black, thin](45mm,45mm) grid +(7.5mm,7.5mm);

\filldraw[black, thin] (0.9375mm,0.9375mm) rectangle +(0.234675mm,0.234375mm);
\draw[<-] (1mm,0.6mm) -- (1mm,-5mm) node[black, thick, anchor= north] {$Q_1^3$};
\filldraw[black, thin] (0.9375mm,2.8125mm) rectangle +(0.234675mm,0.234375mm);
\filldraw[black, thin] (0.9375mm,4.6875mm) rectangle +(0.234675mm,0.234375mm);
\filldraw[black, thin] (0.9375mm,6.5625mm) rectangle +(0.234675mm,0.234375mm);
\filldraw[black, thin] (0.9375mm,15.9375mm) rectangle +(0.234675mm,0.234375mm);
\filldraw[black, thin] (0.9375mm,17.8125mm) rectangle +(0.234675mm,0.234375mm);
\filldraw[black, thin] (0.9375mm,19.6875mm) rectangle +(0.234675mm,0.234375mm);
\filldraw[black, thin] (0.9375mm,21.5625mm) rectangle +(0.234675mm,0.234375mm);
\filldraw[black, thin] (0.9375mm,30.9375mm) rectangle +(0.234675mm,0.234375mm);
\filldraw[black, thin] (0.9375mm,32.8125mm) rectangle +(0.234675mm,0.234375mm);
\filldraw[black, thin] (0.9375mm,34.6875mm) rectangle +(0.234675mm,0.234375mm);
\filldraw[black, thin] (0.9375mm,36.5625mm) rectangle +(0.234675mm,0.234375mm);
\filldraw[black, thin] (0.9375mm,45.9375mm) rectangle +(0.234675mm,0.234375mm);
\filldraw[black, thin] (0.9375mm,47.8125mm) rectangle +(0.234675mm,0.234375mm);
\filldraw[black, thin] (0.9375mm,49.6875mm) rectangle +(0.234675mm,0.234375mm);
\filldraw[black, thin] (0.9375mm,51.5625mm) rectangle +(0.234675mm,0.234375mm);

\filldraw[black, thin] (2.8125mm,0.9375mm) rectangle +(0.234675mm,0.234375mm);
\filldraw[black, thin] (2.8125mm,2.8125mm) rectangle +(0.234675mm,0.234375mm);
\filldraw[black, thin] (2.8125mm,4.6875mm) rectangle +(0.234675mm,0.234375mm);
\filldraw[black, thin] (2.8125mm,6.5625mm) rectangle +(0.234675mm,0.234375mm);
\filldraw[black, thin] (2.8125mm,15.9375mm) rectangle +(0.234675mm,0.234375mm);
\filldraw[black, thin] (2.8125mm,17.8125mm) rectangle +(0.234675mm,0.234375mm);
\filldraw[black, thin] (2.8125mm,19.6875mm) rectangle +(0.234675mm,0.234375mm);
\filldraw[black, thin] (2.8125mm,21.5625mm) rectangle +(0.234675mm,0.234375mm);
\filldraw[black, thin] (2.8125mm,30.9375mm) rectangle +(0.234675mm,0.234375mm);
\filldraw[black, thin] (2.8125mm,32.8125mm) rectangle +(0.234675mm,0.234375mm);
\filldraw[black, thin] (2.8125mm,34.6875mm) rectangle +(0.234675mm,0.234375mm);
\filldraw[black, thin] (2.8125mm,36.5625mm) rectangle +(0.234675mm,0.234375mm);
\filldraw[black, thin] (2.8125mm,45.9375mm) rectangle +(0.234675mm,0.234375mm);
\filldraw[black, thin] (2.8125mm,47.8125mm) rectangle +(0.234675mm,0.234375mm);
\filldraw[black, thin] (2.8125mm,49.6875mm) rectangle +(0.234675mm,0.234375mm);
\filldraw[black, thin] (2.8125mm,51.5625mm) rectangle +(0.234675mm,0.234375mm);

\filldraw[black, thin] (4.6875mm,0.9375mm) rectangle +(0.234675mm,0.234375mm);
\filldraw[black, thin] (4.6875mm,2.8125mm) rectangle +(0.234675mm,0.234375mm);
\filldraw[black, thin] (4.6875mm,4.6875mm) rectangle +(0.234675mm,0.234375mm);
\filldraw[black, thin] (4.6875mm,6.5625mm) rectangle +(0.234675mm,0.234375mm);
\filldraw[black, thin] (4.6875mm,15.9375mm) rectangle +(0.234675mm,0.234375mm);
\filldraw[black, thin] (4.6875mm,17.8125mm) rectangle +(0.234675mm,0.234375mm);
\filldraw[black, thin] (4.6875mm,19.6875mm) rectangle +(0.234675mm,0.234375mm);
\filldraw[black, thin] (4.6875mm,21.5625mm) rectangle +(0.234675mm,0.234375mm);
\filldraw[black, thin] (4.6875mm,30.9375mm) rectangle +(0.234675mm,0.234375mm);
\filldraw[black, thin] (4.6875mm,32.8125mm) rectangle +(0.234675mm,0.234375mm);
\filldraw[black, thin] (4.6875mm,34.6875mm) rectangle +(0.234675mm,0.234375mm);
\filldraw[black, thin] (4.6875mm,36.5625mm) rectangle +(0.234675mm,0.234375mm);
\filldraw[black, thin] (4.6875mm,45.9375mm) rectangle +(0.234675mm,0.234375mm);
\filldraw[black, thin] (4.6875mm,47.8125mm) rectangle +(0.234675mm,0.234375mm);
\filldraw[black, thin] (4.6875mm,49.6875mm) rectangle +(0.234675mm,0.234375mm);
\filldraw[black, thin] (4.6875mm,51.5625mm) rectangle +(0.234675mm,0.234375mm);

\filldraw[black, thin] (6.5625mm,0.9375mm) rectangle +(0.234675mm,0.234375mm);
\filldraw[black, thin] (6.5625mm,2.8125mm) rectangle +(0.234675mm,0.234375mm);
\filldraw[black, thin] (6.5625mm,4.6875mm) rectangle +(0.234675mm,0.234375mm);
\filldraw[black, thin] (6.5625mm,6.5625mm) rectangle +(0.234675mm,0.234375mm);
\filldraw[black, thin] (6.5625mm,15.9375mm) rectangle +(0.234675mm,0.234375mm);
\filldraw[black, thin] (6.5625mm,17.8125mm) rectangle +(0.234675mm,0.234375mm);
\filldraw[black, thin] (6.5625mm,19.6875mm) rectangle +(0.234675mm,0.234375mm);
\filldraw[black, thin] (6.5625mm,21.5625mm) rectangle +(0.234675mm,0.234375mm);
\filldraw[black, thin] (6.5625mm,30.9375mm) rectangle +(0.234675mm,0.234375mm);
\filldraw[black, thin] (6.5625mm,32.8125mm) rectangle +(0.234675mm,0.234375mm);
\filldraw[black, thin] (6.5625mm,34.6875mm) rectangle +(0.234675mm,0.234375mm);
\filldraw[black, thin] (6.5625mm,36.5625mm) rectangle +(0.234675mm,0.234375mm);
\filldraw[black, thin] (6.5625mm,45.9375mm) rectangle +(0.234675mm,0.234375mm);
\filldraw[black, thin] (6.5625mm,47.8125mm) rectangle +(0.234675mm,0.234375mm);
\filldraw[black, thin] (6.5625mm,49.6875mm) rectangle +(0.234675mm,0.234375mm);
\filldraw[black, thin] (6.5625mm,51.5625mm) rectangle +(0.234675mm,0.234375mm);

\filldraw[black, thin] (15.9375mm,0.9375mm) rectangle +(0.234675mm,0.234375mm);
\filldraw[black, thin] (15.9375mm,2.8125mm) rectangle +(0.234675mm,0.234375mm);
\filldraw[black, thin] (15.9375mm,4.6875mm) rectangle +(0.234675mm,0.234375mm);
\filldraw[black, thin] (15.9375mm,6.5625mm) rectangle +(0.234675mm,0.234375mm);
\filldraw[black, thin] (15.9375mm,15.9375mm) rectangle +(0.234675mm,0.234375mm);
\filldraw[black, thin] (15.9375mm,17.8125mm) rectangle +(0.234675mm,0.234375mm);
\filldraw[black, thin] (15.9375mm,19.6875mm) rectangle +(0.234675mm,0.234375mm);
\filldraw[black, thin] (15.9375mm,21.5625mm) rectangle +(0.234675mm,0.234375mm);
\filldraw[black, thin] (15.9375mm,30.9375mm) rectangle +(0.234675mm,0.234375mm);
\filldraw[black, thin] (15.9375mm,32.8125mm) rectangle +(0.234675mm,0.234375mm);
\filldraw[black, thin] (15.9375mm,34.6875mm) rectangle +(0.234675mm,0.234375mm);
\filldraw[black, thin] (15.9375mm,36.5625mm) rectangle +(0.234675mm,0.234375mm);
\filldraw[black, thin] (15.9375mm,45.9375mm) rectangle +(0.234675mm,0.234375mm);
\filldraw[black, thin] (15.9375mm,47.8125mm) rectangle +(0.234675mm,0.234375mm);
\filldraw[black, thin] (15.9375mm,49.6875mm) rectangle +(0.234675mm,0.234375mm);
\filldraw[black, thin] (15.9375mm,51.5625mm) rectangle +(0.234675mm,0.234375mm);

\filldraw[black, thin] (17.8125mm,0.9375mm) rectangle +(0.234675mm,0.234375mm);
\filldraw[black, thin] (17.8125mm,2.8125mm) rectangle +(0.234675mm,0.234375mm);
\filldraw[black, thin] (17.8125mm,4.6875mm) rectangle +(0.234675mm,0.234375mm);
\filldraw[black, thin] (17.8125mm,6.5625mm) rectangle +(0.234675mm,0.234375mm);
\filldraw[black, thin] (17.8125mm,15.9375mm) rectangle +(0.234675mm,0.234375mm);
\filldraw[black, thin] (17.8125mm,17.8125mm) rectangle +(0.234675mm,0.234375mm);
\filldraw[black, thin] (17.8125mm,19.6875mm) rectangle +(0.234675mm,0.234375mm);
\filldraw[black, thin] (17.8125mm,21.5625mm) rectangle +(0.234675mm,0.234375mm);
\filldraw[black, thin] (17.8125mm,30.9375mm) rectangle +(0.234675mm,0.234375mm);
\filldraw[black, thin] (17.8125mm,32.8125mm) rectangle +(0.234675mm,0.234375mm);
\filldraw[black, thin] (17.8125mm,34.6875mm) rectangle +(0.234675mm,0.234375mm);
\filldraw[black, thin] (17.8125mm,36.5625mm) rectangle +(0.234675mm,0.234375mm);
\filldraw[black, thin] (17.8125mm,45.9375mm) rectangle +(0.234675mm,0.234375mm);
\filldraw[black, thin] (17.8125mm,47.8125mm) rectangle +(0.234675mm,0.234375mm);
\filldraw[black, thin] (17.8125mm,49.6875mm) rectangle +(0.234675mm,0.234375mm);
\filldraw[black, thin] (17.8125mm,51.5625mm) rectangle +(0.234675mm,0.234375mm);

\filldraw[black, thin] (19.6875mm,0.9375mm) rectangle +(0.234675mm,0.234375mm);
\filldraw[black, thin] (19.6875mm,2.8125mm) rectangle +(0.234675mm,0.234375mm);
\filldraw[black, thin] (19.6875mm,4.6875mm) rectangle +(0.234675mm,0.234375mm);
\filldraw[black, thin] (19.6875mm,6.5625mm) rectangle +(0.234675mm,0.234375mm);
\filldraw[black, thin] (19.6875mm,15.9375mm) rectangle +(0.234675mm,0.234375mm);
\filldraw[black, thin] (19.6875mm,17.8125mm) rectangle +(0.234675mm,0.234375mm);
\filldraw[black, thin] (19.6875mm,19.6875mm) rectangle +(0.234675mm,0.234375mm);
\filldraw[black, thin] (19.6875mm,21.5625mm) rectangle +(0.234675mm,0.234375mm);
\filldraw[black, thin] (19.6875mm,30.9375mm) rectangle +(0.234675mm,0.234375mm);
\filldraw[black, thin] (19.6875mm,32.8125mm) rectangle +(0.234675mm,0.234375mm);
\filldraw[black, thin] (19.6875mm,34.6875mm) rectangle +(0.234675mm,0.234375mm);
\filldraw[black, thin] (19.6875mm,36.5625mm) rectangle +(0.234675mm,0.234375mm);
\filldraw[black, thin] (19.6875mm,45.9375mm) rectangle +(0.234675mm,0.234375mm);
\filldraw[black, thin] (19.6875mm,47.8125mm) rectangle +(0.234675mm,0.234375mm);
\filldraw[black, thin] (19.6875mm,49.6875mm) rectangle +(0.234675mm,0.234375mm);
\filldraw[black, thin] (19.6875mm,51.5625mm) rectangle +(0.234675mm,0.234375mm);

\filldraw[black, thin] (21.5625mm,0.9375mm) rectangle +(0.234675mm,0.234375mm);
\filldraw[black, thin] (21.5625mm,2.8125mm) rectangle +(0.234675mm,0.234375mm);
\filldraw[black, thin] (21.5625mm,4.6875mm) rectangle +(0.234675mm,0.234375mm);
\filldraw[black, thin] (21.5625mm,6.5625mm) rectangle +(0.234675mm,0.234375mm);
\filldraw[black, thin] (21.5625mm,15.9375mm) rectangle +(0.234675mm,0.234375mm);
\filldraw[black, thin] (21.5625mm,17.8125mm) rectangle +(0.234675mm,0.234375mm);
\filldraw[black, thin] (21.5625mm,19.6875mm) rectangle +(0.234675mm,0.234375mm);
\filldraw[black, thin] (21.5625mm,21.5625mm) rectangle +(0.234675mm,0.234375mm);
\filldraw[black, thin] (21.5625mm,30.9375mm) rectangle +(0.234675mm,0.234375mm);
\filldraw[black, thin] (21.5625mm,32.8125mm) rectangle +(0.234675mm,0.234375mm);
\filldraw[black, thin] (21.5625mm,34.6875mm) rectangle +(0.234675mm,0.234375mm);
\filldraw[black, thin] (21.5625mm,36.5625mm) rectangle +(0.234675mm,0.234375mm);
\filldraw[black, thin] (21.5625mm,45.9375mm) rectangle +(0.234675mm,0.234375mm);
\filldraw[black, thin] (21.5625mm,47.8125mm) rectangle +(0.234675mm,0.234375mm);
\filldraw[black, thin] (21.5625mm,49.6875mm) rectangle +(0.234675mm,0.234375mm);
\filldraw[black, thin] (21.5625mm,51.5625mm) rectangle +(0.234675mm,0.234375mm);

\filldraw[black, thin] (30.9375mm,0.9375mm) rectangle +(0.234675mm,0.234375mm);
\filldraw[black, thin] (30.9375mm,2.8125mm) rectangle +(0.234675mm,0.234375mm);
\filldraw[black, thin] (30.9375mm,4.6875mm) rectangle +(0.234675mm,0.234375mm);
\filldraw[black, thin] (30.9375mm,6.5625mm) rectangle +(0.234675mm,0.234375mm);
\filldraw[black, thin] (30.9375mm,15.9375mm) rectangle +(0.234675mm,0.234375mm);
\filldraw[black, thin] (30.9375mm,17.8125mm) rectangle +(0.234675mm,0.234375mm);
\filldraw[black, thin] (30.9375mm,19.6875mm) rectangle +(0.234675mm,0.234375mm);
\filldraw[black, thin] (30.9375mm,21.5625mm) rectangle +(0.234675mm,0.234375mm);
\filldraw[black, thin] (30.9375mm,30.9375mm) rectangle +(0.234675mm,0.234375mm);
\filldraw[black, thin] (30.9375mm,32.8125mm) rectangle +(0.234675mm,0.234375mm);
\filldraw[black, thin] (30.9375mm,34.6875mm) rectangle +(0.234675mm,0.234375mm);
\filldraw[black, thin] (30.9375mm,36.5625mm) rectangle +(0.234675mm,0.234375mm);
\filldraw[black, thin] (30.9375mm,45.9375mm) rectangle +(0.234675mm,0.234375mm);
\filldraw[black, thin] (30.9375mm,47.8125mm) rectangle +(0.234675mm,0.234375mm);
\filldraw[black, thin] (30.9375mm,49.6875mm) rectangle +(0.234675mm,0.234375mm);
\filldraw[black, thin] (30.9375mm,51.5625mm) rectangle +(0.234675mm,0.234375mm);

\filldraw[black, thin] (32.8125mm,0.9375mm) rectangle +(0.234675mm,0.234375mm);
\filldraw[black, thin] (32.8125mm,2.8125mm) rectangle +(0.234675mm,0.234375mm);
\filldraw[black, thin] (32.8125mm,4.6875mm) rectangle +(0.234675mm,0.234375mm);
\filldraw[black, thin] (32.8125mm,6.5625mm) rectangle +(0.234675mm,0.234375mm);
\filldraw[black, thin] (32.8125mm,15.9375mm) rectangle +(0.234675mm,0.234375mm);
\filldraw[black, thin] (32.8125mm,17.8125mm) rectangle +(0.234675mm,0.234375mm);
\filldraw[black, thin] (32.8125mm,19.6875mm) rectangle +(0.234675mm,0.234375mm);
\filldraw[black, thin] (32.8125mm,21.5625mm) rectangle +(0.234675mm,0.234375mm);
\filldraw[black, thin] (32.8125mm,30.9375mm) rectangle +(0.234675mm,0.234375mm);
\filldraw[black, thin] (32.8125mm,32.8125mm) rectangle +(0.234675mm,0.234375mm);
\filldraw[black, thin] (32.8125mm,34.6875mm) rectangle +(0.234675mm,0.234375mm);
\filldraw[black, thin] (32.8125mm,36.5625mm) rectangle +(0.234675mm,0.234375mm);
\filldraw[black, thin] (32.8125mm,45.9375mm) rectangle +(0.234675mm,0.234375mm);
\filldraw[black, thin] (32.8125mm,47.8125mm) rectangle +(0.234675mm,0.234375mm);
\filldraw[black, thin] (32.8125mm,49.6875mm) rectangle +(0.234675mm,0.234375mm);
\filldraw[black, thin] (32.8125mm,51.5625mm) rectangle +(0.234675mm,0.234375mm);

\filldraw[black, thin] (34.6875mm,0.9375mm) rectangle +(0.234675mm,0.234375mm);
\filldraw[black, thin] (34.6875mm,2.8125mm) rectangle +(0.234675mm,0.234375mm);
\filldraw[black, thin] (34.6875mm,4.6875mm) rectangle +(0.234675mm,0.234375mm);
\filldraw[black, thin] (34.6875mm,6.5625mm) rectangle +(0.234675mm,0.234375mm);
\filldraw[black, thin] (34.6875mm,15.9375mm) rectangle +(0.234675mm,0.234375mm);
\filldraw[black, thin] (34.6875mm,17.8125mm) rectangle +(0.234675mm,0.234375mm);
\filldraw[black, thin] (34.6875mm,19.6875mm) rectangle +(0.234675mm,0.234375mm);
\filldraw[black, thin] (34.6875mm,21.5625mm) rectangle +(0.234675mm,0.234375mm);
\filldraw[black, thin] (34.6875mm,30.9375mm) rectangle +(0.234675mm,0.234375mm);
\filldraw[black, thin] (34.6875mm,32.8125mm) rectangle +(0.234675mm,0.234375mm);
\filldraw[black, thin] (34.6875mm,34.6875mm) rectangle +(0.234675mm,0.234375mm);
\filldraw[black, thin] (34.6875mm,36.5625mm) rectangle +(0.234675mm,0.234375mm);
\filldraw[black, thin] (34.6875mm,45.9375mm) rectangle +(0.234675mm,0.234375mm);
\filldraw[black, thin] (34.6875mm,47.8125mm) rectangle +(0.234675mm,0.234375mm);
\filldraw[black, thin] (34.6875mm,49.6875mm) rectangle +(0.234675mm,0.234375mm);
\filldraw[black, thin] (34.6875mm,51.5625mm) rectangle +(0.234675mm,0.234375mm);

\filldraw[black, thin] (36.5625mm,0.9375mm) rectangle +(0.234675mm,0.234375mm);
\filldraw[black, thin] (36.5625mm,2.8125mm) rectangle +(0.234675mm,0.234375mm);
\filldraw[black, thin] (36.5625mm,4.6875mm) rectangle +(0.234675mm,0.234375mm);
\filldraw[black, thin] (36.5625mm,6.5625mm) rectangle +(0.234675mm,0.234375mm);
\filldraw[black, thin] (36.5625mm,15.9375mm) rectangle +(0.234675mm,0.234375mm);
\filldraw[black, thin] (36.5625mm,17.8125mm) rectangle +(0.234675mm,0.234375mm);
\filldraw[black, thin] (36.5625mm,19.6875mm) rectangle +(0.234675mm,0.234375mm);
\filldraw[black, thin] (36.5625mm,21.5625mm) rectangle +(0.234675mm,0.234375mm);
\filldraw[black, thin] (36.5625mm,30.9375mm) rectangle +(0.234675mm,0.234375mm);
\filldraw[black, thin] (36.5625mm,32.8125mm) rectangle +(0.234675mm,0.234375mm);
\filldraw[black, thin] (36.5625mm,34.6875mm) rectangle +(0.234675mm,0.234375mm);
\filldraw[black, thin] (36.5625mm,36.5625mm) rectangle +(0.234675mm,0.234375mm);
\filldraw[black, thin] (36.5625mm,45.9375mm) rectangle +(0.234675mm,0.234375mm);
\filldraw[black, thin] (36.5625mm,47.8125mm) rectangle +(0.234675mm,0.234375mm);
\filldraw[black, thin] (36.5625mm,49.6875mm) rectangle +(0.234675mm,0.234375mm);
\filldraw[black, thin] (36.5625mm,51.5625mm) rectangle +(0.234675mm,0.234375mm);

\filldraw[black, thin] (45.9375mm,0.9375mm) rectangle +(0.234675mm,0.234375mm);
\filldraw[black, thin] (45.9375mm,2.8125mm) rectangle +(0.234675mm,0.234375mm);
\filldraw[black, thin] (45.9375mm,4.6875mm) rectangle +(0.234675mm,0.234375mm);
\filldraw[black, thin] (45.9375mm,6.5625mm) rectangle +(0.234675mm,0.234375mm);
\filldraw[black, thin] (45.9375mm,15.9375mm) rectangle +(0.234675mm,0.234375mm);
\filldraw[black, thin] (45.9375mm,17.8125mm) rectangle +(0.234675mm,0.234375mm);
\filldraw[black, thin] (45.9375mm,19.6875mm) rectangle +(0.234675mm,0.234375mm);
\filldraw[black, thin] (45.9375mm,21.5625mm) rectangle +(0.234675mm,0.234375mm);
\filldraw[black, thin] (45.9375mm,30.9375mm) rectangle +(0.234675mm,0.234375mm);
\filldraw[black, thin] (45.9375mm,32.8125mm) rectangle +(0.234675mm,0.234375mm);
\filldraw[black, thin] (45.9375mm,34.6875mm) rectangle +(0.234675mm,0.234375mm);
\filldraw[black, thin] (45.9375mm,36.5625mm) rectangle +(0.234675mm,0.234375mm);
\filldraw[black, thin] (45.9375mm,45.9375mm) rectangle +(0.234675mm,0.234375mm);
\filldraw[black, thin] (45.9375mm,47.8125mm) rectangle +(0.234675mm,0.234375mm);
\filldraw[black, thin] (45.9375mm,49.6875mm) rectangle +(0.234675mm,0.234375mm);
\filldraw[black, thin] (45.9375mm,51.5625mm) rectangle +(0.234675mm,0.234375mm);

\filldraw[black, thin] (47.8125mm,0.9375mm) rectangle +(0.234675mm,0.234375mm);
\filldraw[black, thin] (47.8125mm,2.8125mm) rectangle +(0.234675mm,0.234375mm);
\filldraw[black, thin] (47.8125mm,4.6875mm) rectangle +(0.234675mm,0.234375mm);
\filldraw[black, thin] (47.8125mm,6.5625mm) rectangle +(0.234675mm,0.234375mm);
\filldraw[black, thin] (47.8125mm,15.9375mm) rectangle +(0.234675mm,0.234375mm);
\filldraw[black, thin] (47.8125mm,17.8125mm) rectangle +(0.234675mm,0.234375mm);
\filldraw[black, thin] (47.8125mm,19.6875mm) rectangle +(0.234675mm,0.234375mm);
\filldraw[black, thin] (47.8125mm,21.5625mm) rectangle +(0.234675mm,0.234375mm);
\filldraw[black, thin] (47.8125mm,30.9375mm) rectangle +(0.234675mm,0.234375mm);
\filldraw[black, thin] (47.8125mm,32.8125mm) rectangle +(0.234675mm,0.234375mm);
\filldraw[black, thin] (47.8125mm,34.6875mm) rectangle +(0.234675mm,0.234375mm);
\filldraw[black, thin] (47.8125mm,36.5625mm) rectangle +(0.234675mm,0.234375mm);
\filldraw[black, thin] (47.8125mm,45.9375mm) rectangle +(0.234675mm,0.234375mm);
\filldraw[black, thin] (47.8125mm,47.8125mm) rectangle +(0.234675mm,0.234375mm);
\filldraw[black, thin] (47.8125mm,49.6875mm) rectangle +(0.234675mm,0.234375mm);
\filldraw[black, thin] (47.8125mm,51.5625mm) rectangle +(0.234675mm,0.234375mm);

\filldraw[black, thin] (49.6875mm,0.9375mm) rectangle +(0.234675mm,0.234375mm);
\filldraw[black, thin] (49.6875mm,2.8125mm) rectangle +(0.234675mm,0.234375mm);
\filldraw[black, thin] (49.6875mm,4.6875mm) rectangle +(0.234675mm,0.234375mm);
\filldraw[black, thin] (49.6875mm,6.5625mm) rectangle +(0.234675mm,0.234375mm);
\filldraw[black, thin] (49.6875mm,15.9375mm) rectangle +(0.234675mm,0.234375mm);
\filldraw[black, thin] (49.6875mm,17.8125mm) rectangle +(0.234675mm,0.234375mm);
\filldraw[black, thin] (49.6875mm,19.6875mm) rectangle +(0.234675mm,0.234375mm);
\filldraw[black, thin] (49.6875mm,21.5625mm) rectangle +(0.234675mm,0.234375mm);
\filldraw[black, thin] (49.6875mm,30.9375mm) rectangle +(0.234675mm,0.234375mm);
\filldraw[black, thin] (49.6875mm,32.8125mm) rectangle +(0.234675mm,0.234375mm);
\filldraw[black, thin] (49.6875mm,34.6875mm) rectangle +(0.234675mm,0.234375mm);
\filldraw[black, thin] (49.6875mm,36.5625mm) rectangle +(0.234675mm,0.234375mm);
\filldraw[black, thin] (49.6875mm,45.9375mm) rectangle +(0.234675mm,0.234375mm);
\filldraw[black, thin] (49.6875mm,47.8125mm) rectangle +(0.234675mm,0.234375mm);
\filldraw[black, thin] (49.6875mm,49.6875mm) rectangle +(0.234675mm,0.234375mm);
\filldraw[black, thin] (49.6875mm,51.5625mm) rectangle +(0.234675mm,0.234375mm);

\filldraw[black, thin] (51.5625mm,0.9375mm) rectangle +(0.234675mm,0.234375mm);
\filldraw[black, thin] (51.5625mm,2.8125mm) rectangle +(0.234675mm,0.234375mm);
\filldraw[black, thin] (51.5625mm,4.6875mm) rectangle +(0.234675mm,0.234375mm);
\filldraw[black, thin] (51.5625mm,6.5625mm) rectangle +(0.234675mm,0.234375mm);
\filldraw[black, thin] (51.5625mm,15.9375mm) rectangle +(0.234675mm,0.234375mm);
\filldraw[black, thin] (51.5625mm,17.8125mm) rectangle +(0.234675mm,0.234375mm);
\filldraw[black, thin] (51.5625mm,19.6875mm) rectangle +(0.234675mm,0.234375mm);
\filldraw[black, thin] (51.5625mm,21.5625mm) rectangle +(0.234675mm,0.234375mm);
\filldraw[black, thin] (51.5625mm,30.9375mm) rectangle +(0.234675mm,0.234375mm);
\filldraw[black, thin] (51.5625mm,32.8125mm) rectangle +(0.234675mm,0.234375mm);
\filldraw[black, thin] (51.5625mm,34.6875mm) rectangle +(0.234675mm,0.234375mm);
\filldraw[black, thin] (51.5625mm,36.5625mm) rectangle +(0.234675mm,0.234375mm);
\filldraw[black, thin] (51.5625mm,45.9375mm) rectangle +(0.234675mm,0.234375mm);
\filldraw[black, thin] (51.5625mm,47.8125mm) rectangle +(0.234675mm,0.234375mm);
\filldraw[black, thin] (51.5625mm,49.6875mm) rectangle +(0.234675mm,0.234375mm);
\filldraw[black, thin] (51.5625mm,51.5625mm) rectangle +(0.234675mm,0.234375mm);
\end{tikzpicture}

\caption{A visualization of the support of $w_k$ when $n=2$, $k=3$.}

\label{fig5}
\end{figure}

%-------------------------------------------------
% NEW SECTION: PROOF OF THE THEOREM
%-------------------------------------------------
\section{Proof of the Theorem}

We first need the following two lemmas.

\begin{lemma} \label{lem2}
For $k\geqslant 3$ and $x\in \bigcup_{m=3}^{\infty}Q_1^m$, we have $T_\mathcal{S}(w_k)(x)\geqslant \frac{k}{2(2^n-1)} \thinspace w_k(x)$.
\end{lemma}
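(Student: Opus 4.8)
The plan is to evaluate $T_{\mathcal S}(w_k)(x)$ straight from the definition, after determining exactly which cubes of $\mathcal S$ contain a given point $x\in Q_1^m$. Since $Q_1^m\subset[0,2^{-k})^n$ clusters at the corner of the origin while every cube of the form $[2^j,2^j+2^{-N})^n$ (the families $\mathcal S_j$ with $j\geq 1$) has all coordinates $\geq 2^j\geq 2$, none of the latter meet $x$; only the nested family $\mathcal S_0=\{[0,2^{-N})^n:N\geq 0\}$ contributes. A point $x\in Q_1^m$ has every coordinate of size $\approx 2^{-k(m-1)-1}$, and a direct check gives $x\in[0,2^{-N})^n$ precisely for $N=0,1,\dots,k(m-1)$. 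Writing
\[
A_N:=\frac{1}{\bigl|[0,2^{-N})^n\bigr|}\int_{[0,2^{-N})^n}w_k=2^{Nn}\,w_k\bigl([0,2^{-N})^n\bigr)
\]
for the average of $w_k$ over the $N$-th cube, the definition of the sparse operator yields $T_{\mathcal S}(w_k)(x)=\sum_{N=0}^{k(m-1)}A_N$.

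The heart of the argument is to compute these averages from the self-similar structure of $w_k$. Put $\alpha=\frac{2^{kn}}{2^{(k-1)n}+1}$, so that $w_k\equiv\alpha^m$ on $\Omega_m$; in particular $w_k(x)=\alpha^m$. I claim $A_N$ is constant across each block of $k$ consecutive indices, namely
\[
A_N=\alpha^{\ell}\qquad\text{whenever}\quad k(\ell-1)<N\leq k\ell,\qquad \ell=1,\dots,m-1.
\]
At the right endpoint $N=k\ell$ the cube $[0,2^{-N})^n$ is exactly the corner cube $P_1^{\ell}$, which carries mass $(2^{(k-1)n}+1)^{-\ell}$, so $A_{k\ell}=2^{k\ell n}(2^{(k-1)n}+1)^{-\ell}=\alpha^{\ell}$. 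For the interior indices $k(\ell-1)<N<k\ell$ the cube $[0,2^{-N})^n$ lies inside the rescaled copy of $[0,\tfrac12)^n$ sitting at the corner of $P_1^{\ell-1}$, and there it is an exact union of the equal-mass generation-$\ell$ cubes $P_i^{\ell}$ (side $2^{-k\ell}$, mass $(2^{(k-1)n}+1)^{-\ell}$ each); passing from $N$ to $N+1$ replaces $[0,2^{-N})^n$ by a sub-cube that is still such a union, dividing both its volume and its $w_k$-mass by $2^n$, whence $A_{N+1}=A_N$. The main obstacle is exactly this bookkeeping: one must confirm that within a fixed generation the mass of $w_k$ is spread uniformly at scale $2^{-k\ell}$, which follows by induction on $\ell$ from Steps $1$–$2$ of the construction, since each $P_i^{\ell-1}$ inherits equal mass and is an exact rescaled copy of the original unit-cube configuration.

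With the averages known I simply retain the top generation $\ell=m-1$, consisting of the $k$ indices $N=k(m-2)+1,\dots,k(m-1)$, each equal to $\alpha^{m-1}$; since all $A_N\geq 0$,
\[
T_{\mathcal S}(w_k)(x)\geq\!\!\sum_{N=k(m-2)+1}^{k(m-1)}\!\!A_N=k\,\alpha^{m-1}=k\,\alpha^{-1}w_k(x).
\]
Finally $\alpha^{-1}=\frac{2^{(k-1)n}+1}{2^{kn}}>2^{-n}$, and $2(2^n-1)\geq 2^n$ for every $n\geq 1$, so $\alpha^{-1}\geq\frac{1}{2(2^n-1)}$ and the stated bound $T_{\mathcal S}(w_k)(x)\geq\frac{k}{2(2^n-1)}\,w_k(x)$ follows. (Summing the full series $\sum_{\ell=1}^{m-1}k\alpha^{\ell}$ instead gives the sharper constant $\sim\frac{k}{\alpha-1}$, which is why the denominator is $2^n-1$; the hypothesis $k\geq 3$ keeps $\alpha$ bounded away from $1$ so the lower-order terms are negligible.)
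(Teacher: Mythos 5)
Your proof is correct and follows essentially the same route as the paper: only the family $\mathcal{S}_0$ contributes, $x\in[0,2^{-N})^n$ holds exactly for $0\leqslant N\leqslant k(m-1)$, and the averages are constant equal to $\alpha^{\ell}=\bigl(2^{kn}/(2^{(k-1)n}+1)\bigr)^{\ell}$ on each block $k(\ell-1)<N\leqslant k\ell$, which is precisely the content of the paper's identity \eqref{eq2}. You differ only in the final step: the paper sums the full geometric series and then needs the inequality $\alpha^m\geqslant 2\alpha$ (the one place where $k\geqslant3$, $m\geqslant3$ are used), whereas you retain just the top block of $k$ terms, a mild simplification of the endgame that in fact yields the stated bound for all $m\geqslant2$.
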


\begin{proof}
Assuming that $x\in Q_1^m$ for some integer $m\geqslant3$, we have
\begin{equation} \label{eq1}
\begin{split}
    T_\mathcal{S}(w_k)(x) & =\sum_{Q\in S}\frac{1}{|Q|}\int_Qw_k(y)\thinspace dy \hspace{2mm} \mathbbm{1}_Q(x) \\
    & =\sum_{Q\in S_0}\frac{1}{|Q|}\int_Qw_k(y)\thinspace dy \hspace{2mm} \mathbbm{1}_Q(x) \\
    & =\sum_{N=0}^{\infty}2^{Nn}\int_{[0,2^{-N})^n}w_k(y)\thinspace dy \hspace{2mm} \mathbbm{1}_{[0,2^{-N})^n}(x) \\
    & =\sum_{N=0}^{k(m-1)}2^{Nn}\int_{[0,2^{-N})^n}w_k(y)\thinspace dy \\
    & =1+\sum_{i=1}^{m-1}\sum_{j=1}^{k}2^{(j+k(i-1))n}\thinspace w_k\bigl([0,2^{-j-k(i-1)})^n\bigr)
\end{split}
\end{equation}
where the fourth equality is due to the fact that $Q_1^m$ is at least $2^{k-1}\times2^{-km}=2^{-[k(m-1)+1]}$ units away from the origin in all directions.

By the construction of $w_k$, we have
\begin{equation} \label{eq2}
    w_k\bigl([0,2^{-j-k(i-1)})^n\bigr) =\sum_{\substack{P\in\mathscr{P}^i \\ P\subset [0,2^{-j-k(i-1)})^n}}w_k(P) = 2^{(k-j)n}\thinspace w_k(Q_1^i).
\end{equation}

From \eqref{eq1} and \eqref{eq2}, we get
\begin{equation*}
\begin{split}
    T_\mathcal{S}(w_k)(x) & =1+\sum_{i=1}^{m-1}\sum_{j=1}^{k}2^{(j+k(i-1))n}\thinspace2^{(k-j)n}\thinspace2^{-kin}\left(\frac{2^{kn}}{2^{(k-1)n}+1}\right)^i \\
    & =1+\sum_{i=1}^{m-1}\sum_{j=1}^{k}\left(\frac{2^{kn}}{2^{(k-1)n}+1}\right)^i \\
    & =1+k \cdot \frac{2^{(k-1)n}+1}{(2^n-1)2^{(k-1)n}-1}\cdot \left[\left(\frac{2^{kn}}{2^{(k-1)n}+1}\right)^m-\frac{2^{kn}}{2^{(k-1)n}+1}\right] \\
    & =1+ \frac{k}{2} \cdot \frac{2^{(k-1)n}+1}{(2^n-1)2^{(k-1)n}-1}\cdot\left(\frac{2^{kn}}{2^{(k-1)n}+1}\right)^m \\
    & \hspace{6.5mm} + \frac{k}{2} \cdot \frac{2^{(k-1)n}+1}{(2^n-1)2^{(k-1)n}-1}\cdot\left[\left(\frac{2^{kn}}{2^{(k-1)n}+1}\right)^m-\frac{2^{kn+1}}{2^{(k-1)n}+1}\right].
\end{split}
\end{equation*}
Since $\left(\frac{2^{kn}}{2^{(k-1)n}+1}\right)^m \geqslant \frac{2^{kn+1}}{2^{(k-1)n}+1}$ whenever $k\geqslant3$, $m\geqslant3$ and $n\geqslant1$, we have
$$
T_\mathcal{S}(w_k)(x)\geqslant \frac{k}{2(2^n-1)} \thinspace w_k(x),
$$
and this finishes the proof of Lemma \ref{lem2}.
\end{proof}

\hfill

Consider the weight
$$w(x)=\sum_{k=3}^{\infty} A_k \thinspace w_k(x-\vec{2^k}) \hspace{1mm} \mathbbm{1}_{\Gamma(k)}(x-\vec{2^k})$$
where $\Gamma(k)=\bigcup_{m=3}^{\infty}Q_1^m(k)$, $\vec{2^k}=(2^k,...\thinspace,2^k)\in \mathbb{R}^n$, and the constant $A_k$ is chosen so that $A_kw_k(\Gamma(k))=1$, i.e., $A_k=(w_k(\Gamma(k)))^{-1}=2^{(k-1)n}\bigl(2^{(k-1)n}+1\bigr)^2$.   It is clear that $\supp(w)=\bigcup_{k=3}^{\infty}(\vec{2^k}+\Gamma(k))$.

\hfill

\begin{lemma}\label{lem3}
$M(w)(x)\sim w(x)$ for almost every $x\in \supp(w)$.
\end{lemma}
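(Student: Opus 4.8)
The plan is to exploit the fact that $w$ splits into pieces that live on widely separated cubes. Write $w=\sum_{k=3}^{\infty}w^{(k)}$ where $w^{(k)}(x)=A_k\,w_k(x-\vec{2^k})\,\mathbbm{1}_{\Gamma(k)}(x-\vec{2^k})$. Since $\Gamma(k)\subseteq[0,1)^n$, the support of $w^{(k)}$ lies inside $[2^k,2^k+1)^n$, so the pieces occupy disjoint cubes separated by gaps comparable to $2^{\max(k,\ell)}$. Moreover, by the choice of the normalizing constant, each piece carries unit mass, $\int_{\R^n}w^{(k)}=A_k\,w_k(\Gamma(k))=1$. The lower bound $w(x)\lesssim M(w)(x)$ is immediate: $w$ is constant on the interior of each translated cube $Q_i^m$, so almost every point of $\supp(w)$ is a Lebesgue point and $M(w)(x)\geqslant w(x)$ there.

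For the upper bound I would fix a point $x\in\vec{2^\ell}+\Gamma(\ell)$ and use sublinearity, $M(w)(x)\leqslant\sum_{k=3}^{\infty}M(w^{(k)})(x)$, handling the diagonal term $k=\ell$ and the off-diagonal terms $k\neq\ell$ separately. For the diagonal term, since $w^{(\ell)}\leqslant A_\ell\,w_\ell(\cdot-\vec{2^\ell})$ pointwise and $M$ is monotone and translation invariant, $M(w^{(\ell)})(x)\leqslant A_\ell\,M(w_\ell)(x-\vec{2^\ell})$. Because $x-\vec{2^\ell}\in\Gamma(\ell)\subseteq\bigcup_{m}\Omega_m$, Lemma~\ref{lem1} yields $M(w_\ell)(x-\vec{2^\ell})\leqslant 9^n w_\ell(x-\vec{2^\ell})$, and on this piece $A_\ell\,w_\ell(x-\vec{2^\ell})=w(x)$. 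Hence the diagonal term is at most $9^n w(x)$.

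For the off-diagonal terms, any cube $Q\ni x$ meeting $\supp(w^{(k)})$ must have side length at least the distance between $[2^\ell,2^\ell+1)^n$ and $[2^k,2^k+1)^n$, which is $\gtrsim 2^{k-2}$ when $k>\ell$ and $\gtrsim 2^{\ell-2}$ when $3\leqslant k<\ell$. Combined with the unit-mass bound $\int_Q w^{(k)}\leqslant 1$, this gives $M(w^{(k)})(x)\leqslant 2^{-(k-2)n}$ for $k>\ell$ and $M(w^{(k)})(x)\leqslant 2^{-(\ell-2)n}$ for $k<\ell$. Summing the resulting (geometric-type) series shows $\sum_{k\neq\ell}M(w^{(k)})(x)\leqslant C_n$ for a dimensional constant $C_n$, uniformly in $\ell$ and $x$.

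The last step is to compare this constant with the size of $w$ on its support. Since $w_\ell$ is increasing in $m$ on the cubes $Q_1^m$, the minimal value of $w$ on $\vec{2^\ell}+\Gamma(\ell)$ occurs at $m=3$ and equals $A_\ell\bigl(2^{\ell n}/(2^{(\ell-1)n}+1)\bigr)^3\gtrsim 2^{3\ell n}\geqslant 1$ for $\ell\geqslant 3$; thus $C_n\leqslant C_n\,w(x)$, and altogether $M(w)(x)\leqslant(9^n+C_n)\,w(x)$. The only delicate point is the off-diagonal estimate, namely verifying the geometric separation of the supports and checking that this contribution is genuinely negligible against the enormous values of $w$; this is precisely where the normalization $\int w^{(k)}=1$ together with the rapid growth of $w$ makes the argument close.
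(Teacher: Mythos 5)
Your proposal is correct and takes essentially the same approach as the paper: the local piece is handled via Lemma \ref{lem1}, the faraway pieces via the normalization $A_k\,w_k(\Gamma(k))=1$ together with the $\gtrsim 2^{\max(k,\ell)-2}$ separation of the supports, and the resulting dimensional constant is absorbed because $w\geqslant 1$ on $\supp(w)$. The only difference is organizational: the paper fixes a cube $Q\ni x$ and splits into the cases $\ell(Q)\leqslant 2^{k-2}$ and $\ell(Q)>2^{k-2}$, while you split $w$ itself and sum $M(w^{(k)})$ by sublinearity; both reduce to the same two estimates.
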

\begin{proof}
Since $x\in \supp(w)$, we have $x\in (\vec{2^k}+\Gamma(k))$ for some integer $k\geqslant 3$. For any cube $Q\ni x$, if $\ell(Q)\leqslant2^{k-2}$, then we have
\begin{equation} \label{eq3}
\begin{split}
    \frac{1}{|Q|}\int_Q w(y) \thinspace dy & = \frac{1}{|Q|}\int_Q A_{k}w_k(y-\vec{2^k})\mathbbm{1}_{\Gamma(k)}(y-\vec{2^k}) \thinspace dy \\
    & = A_{k} \thinspace \frac{1}{|Q-\vec{2^k}|}\int_{Q-\vec{2^k}} w_k(z)\mathbbm{1}_{\Gamma(k)}(z) \thinspace dz \\
    & \leqslant A_{k} \thinspace M(w_k\mathbbm{1}_{\Gamma(k)})(x-\vec{2^k}) \\
    & \leqslant A_{k} \thinspace M(w_k)(x-\vec{2^k}) \\
    & \lesssim A_{k} w_k(x-\vec{2^k}) = w(x)
\end{split}
\end{equation}
where the last inequality is due to Lemma \ref{lem1}.

If $\ell(Q)>2^{k-2}$, there exists an integer $k_1$ such that $2^{k_1-1}<\ell(Q)\leqslant2^{k_1}$. This implies that

\begin{equation} \label{eq4}
\begin{split}
    \frac{1}{|Q|}\int_Q w(y) \thinspace dy & = \frac{1}{|Q|}\int_Q \sum_{l=3}^{k_1+1} A_{l}\thinspace w_l(y-\vec{2^l})\mathbbm{1}_{\Gamma(l)}(y-\vec{2^l}) \thinspace dy \\
    & \leqslant \frac{1}{2^{(k_1-1)n}}\sum_{l=3}^{k_1+1} A_{l}\thinspace w_l(\Gamma(l)) \\
    & \leqslant \frac{k_1}{2^{(k_1-1)n}} < 1 < A_k \cdot w_k(x-\vec{2^k}) \\
    & \leqslant A_k \cdot M(w_k)(x-\vec{2^k}) \lesssim w(x).
\end{split}
\end{equation}

From \eqref{eq3} and \eqref{eq4}, we obtain the desired conlusion.
\end{proof}

\hfill

\begin{proof}[Proof of Theorem \ref{thmA}] 
Let $v=\left(\frac{Mw}{w}\right)^pw$, and consider the function
$$
f=\sum_{k=3}^{\infty}\frac{1}{k^\epsilon}\thinspace A_k\thinspace w_k(x-\vec{2^k})
$$
where $\frac{1}{p'}<\epsilon<1$. Since $T_\mathcal{S}$ is self-adjoint, to show the unboundedness of $T_\mathcal{S}$, it suffices to disprove the inequality
$$
\|T_\mathcal{S}f\|_{L^{p'}(v^{1-p'})} \lesssim \|f\|_{L^{p'}(w^{1-p'})}.
$$

In fact, we have
\begin{equation*}
\begin{split}
    \|f\|_{L^{p'}(w^{1-p'})}^{p'} & =\sum_{k=3}^{\infty}A_k\thinspace\frac{1}{k^{\epsilon p'}}\int_\mathbb{R}w_k(x-2^k) \thinspace\mathbbm{1}_{\Gamma(k)}(x-2^k) \thinspace dx \\
    & =\sum_{k=3}^{\infty}A_k\thinspace\frac{1}{k^{\epsilon p'}}w_k(\Gamma(k)) \\
    & =\sum_{k=3}^{\infty}A_k\frac{1}{k^{\epsilon p'}}\cdot\frac{1}{2^{k-1}(2^{k-1}+1)^2} = \sum_{k=3}^{\infty}\frac{1}{k^{\epsilon p'}} < \infty
\end{split}
\end{equation*}
while, by Lemma \ref{lem2} and Lemma \ref{lem3}, we have
\begin{equation*}
\begin{split}
    \|T_\mathcal{S}f\|_{L^{p'}(v^{1-p'})}^{p'} & =\int_\mathbb{R}T_\mathcal{S}f(x)^{p'}\frac{w(x)}{Mw(x)^{p'}} \thinspace dx \\
    & \geqslant C \int_\mathbb{R}T_\mathcal{S}f(x)^{p'}\thinspace w(x)^{1-p'} dx \\
    & = C \sum_{k=3}^{\infty}A_k^{1-p'}\thinspace \int_{\vec{2^k}+\Gamma(k)}T_\mathcal{S}f(x)^{p'}w_k(x-\vec{2^k})^{1-p'}\thinspace dx \\
    & = C \sum_{k=3}^{\infty}A_k^{1-p'}\thinspace \int_{\Gamma(k)}T_\mathcal{S}f(x+\vec{2^k})^{p'}w_k(x)^{1-p'}\thinspace dx \\
    & \geqslant C \sum_{k=3}^{\infty}A_k k^{(1-\epsilon)p'} w_k(\Gamma(k)) =\sum_{k=3}^{\infty}k^{(1-\epsilon)p'} = \infty.
\end{split}
\end{equation*}

\hfill

In the fifth estimate, we used the fact that whenever $x\in\Gamma(k)$, one has
\begin{equation*}
\begin{split}
    T_\mathcal{S}f(x+\vec{2^k}) & =\sum_{Q\in S_k}\frac{1}{|Q|}\int_Qf(y)\thinspace dy \hspace{1mm} \mathbbm{1}_Q(x+\vec{2^k}) \\
    & =\sum_{P\in S_0}\frac{1}{|P|}\int_{\vec{2^k}+P}f(y)\thinspace dy \hspace{1mm} \mathbbm{1}_P(x) \\
    & =\sum_{P\in S_0}\frac{1}{|P|}\int_{\vec{2^k}+P}\frac{1}{k^\epsilon}\thinspace A_k\thinspace w_k(y-\vec{2^k})\thinspace dy \hspace{1mm} \mathbbm{1}_P(x) \\
    & =\frac{1}{k^\epsilon}\thinspace A_k\thinspace\sum_{P\in S_0}\frac{1}{|P|}\int_{P}w_k(z)\thinspace dz \hspace{1mm} \mathbbm{1}_P(x) \\
    & =\frac{1}{k^\epsilon}\thinspace A_k\thinspace T_\mathcal{S}(w_k)(x)
\end{split}
\end{equation*}
which makes Lemma \ref{lem2} still applicable in the situation.

\hfill

Using Lemma \ref{lem3}, the boundedness of the maximal operator follows exactly the same as in \cite{CS}. For the reader's convenience, we will summarize the arguments as following.
\begin{multline*}
    \|Mf\|^p_{L^p(w)}  =\int_{\mathbb{R}^n} Mf(x)^pw(x)\thinspace dx \lesssim \int_{\mathbb{R}^n} |f(x)|^p\thinspace Mw(x)\thinspace dx \\
     \lesssim \int_{\mathbb{R}^n} |f(x)|^p \left(\frac{Mw(x)}{w(x)}\right)^pw(x)\thinspace dx = \|f\|^p_{L^p(v)}
\end{multline*}
where the first inequality is due to Fefferman and Stein \cite{FS} (remember that $\supp f\subset \supp w$).

Similarly, we have
\begin{multline*}
    \|Mf\|^{p'}_{L^{p'}(v^{1-p'})}  =\int_{\mathbb{R}^n} Mf(x)^{p'}v(x)^{1-p'}\thinspace dx  \approx \int_{\mathbb{R}^n} Mf(x)^{p'}w(x)^{1-p'} dx \\
     \lesssim \int_{\mathbb{R}^n} |f(x)|^{p'} w(x)^{1-p'} dx = \|f\|^{p'}_{L^{p'}(w^{1-p'})}
\end{multline*}

where the last inequality was proved in \cite{CS} (cf. Proof of Theorem 3).
\end{proof}

\hfill

%-------------------------------------------------
% REFERENCES
%-------------------------------------------------
\bibliographystyle{amsplain}

\begin{thebibliography}{10}

%\bibitem{CCDPO} J.M. Conde-Alonso, A. Culiuc, F. Di Plinio, and Y. Ou, \textit{A sparse domination principle for rough singular integrals}, preprint, available at \textsf{arXiv:1612.09201}.

\bibitem{CLO} M. Caldarelli, A. K. Lerner, and S. Ombrosi, \textit{On a counterexample related to weighted weak type estimates for singular integrals}, preprint, available at \textsf{arXiv:1506.08317}.

\bibitem{CDPO} A. Culiuc, F. Di Plinio, and Y. Ou, \textit{Uniform sparse domination of singular integrals via dyadic shifts}, preprint, available at \textsf{arXiv:1610.01958}.

\bibitem{CAR} J. Conde-Alonso and Guillermo Rey \textit{A pointwise estimate for positive dyadic shifts and some applications}, preprint, available at \textsf{arXiv:1409.4351}.

\bibitem{CS} A. Criado and F. Soria, \textit{Muckenhoupt-Wheeden conjectures in higher dimensions}, preprint, available at \textsf{arXiv:1312.5255}.

\bibitem{PC} D. Cruz-Uribe and C. P\'erez, \textit{Two weight extrapolation via the maximal operator}, J. Funct. Anal. \textbf{174} (2000), no. 1, 1-17.

\bibitem{HRT} C. Hyt\"onen, L. Roncal and O. Tapiola, \textit{Quantitative weighted estimates for rough homogeneous singular integrals}, to appear in Israel J. Math. (2016), \textsf{arXiv:1501.05789}

\bibitem{La} M. T. Lacey, \textit{An Elementary proof of the $A_2$ bound}, \textsf{arXiv:1501.05818}.

\bibitem{LN} A. Lerner and F. Nazarov, \textit{Intuitive dyadic calculus}, \textsf{arXiv:1508.05639}.


\bibitem{FS} C. Fefferman and E. M. Stein, \textit{Some maximal inequalities}, Amer. J. Math. \textbf{93} (1971), 107-115.

\bibitem{L} A. K. Lerner, \textit{On an estimate of Calder\'on-Zygmund operators by dyadic positive operators}, J. Anal. Math. \textbf{121} (2013), no. 1, 141-161.

\bibitem {L2} A. K. Lerner, \textit{On pointwise estimates involving sparse operators}, New York Math. J. \textbf{22} (2016), 341-349.


\bibitem{R} M. C. Reguera, \textit{On Muckenhoupt-Wheeden conjecture}, Adv. Math. \textbf{227} (2011), 1436-1450.

\bibitem{RS} M. C. Reguera and J. Scurry, \textit{On joint estimates for maximal functions and singular integrals in weighted spaces}, Proceeding of the American mathematical society \textbf{141} (2013), 1705-1717.

\bibitem{RT} M. C. Reguera and C. Thiele, \textit{The Hilbert transform does not map $L^1(Mw)$ to $L^{1,\infty}(w)$}, Math. Res. Lett. \textbf{19} (2012), no. 1, 1-7.

% \bibitem {S} E. T. Sawyer, \textit{A characterization of a two-weight norm inequality for maximal operators}, Studia Math. \textbf{75} (1982), no. 1, 1-11.

\end{thebibliography}

\hfill

\end{document}